\documentclass[a4 paper, 10pt]{article}
\usepackage{amsmath}
\usepackage{latexsym}
\usepackage{amsfonts}
\usepackage{amssymb}
\usepackage{amsthm}
\usepackage{amsbsy}
\usepackage{graphicx}
\usepackage{bbm}
\usepackage{placeins }
\usepackage{color}
\usepackage{epstopdf}



\newcommand{\R}{\mathbb{R}}
\newcommand{\N}{\mathbb{N}}

\newcommand{\cv}{{\it v}}

\newcommand{\cH}{\mathcal{H}}



\newtheorem{theorem}{Theorem}[section]

\newtheorem{definition}{Definition}[section]

\newtheorem{remark}{Remark}[section]
\newtheorem{rem}{Remark}[section]
\newtheorem{example}{Example}[section]

\begin{document}

\title{\Large \bf Asymptotics for optimal controls for horizontal mean curvature flow.
}

\author{Nicolas Dirr\thanks{Cardiff School of Mathematics, Cardiff University, Cardiff, UK, e-mail: dirrnp@cardiff.ac.uk.} \and
Federica Dragoni
\thanks{Cardiff School of Mathematics, Cardiff University, Cardiff, UK, e-mail: DragoniF@cardiff.ac.uk.} \and
Raffaele Grande 
\thanks{Cardiff School of Mathematics, Cardiff University, Cardiff, UK, e-mail: GrandeR@cardiff.ac.uk}
}

\maketitle

\begin{abstract}
\noindent
The solutions to surface evolution problems like mean curvature flow can be expressed as value functions of suitable stochastic control problems, obtained as limit of a family of regularised control problems. The control-theoretical approach is particularly suited  for such problems for degenerate geometries like the Heisenberg group. In this situation a new type of singularities absent for the Euclidean mean curvature flow occurs, the so-called characteristic points. This paper investigates the asymptotic behaviour of the regularised optimal controls in the vicinity of such characteristic points.
\end{abstract}

\noindent {\bf Keywords}:   Mean Curvature flow, stochastic dynamics, optimal controls, p-Hamiltonian, level sets equations,  Carnot groups, Heisenberg group, 
H\"ormander vector fields,
degenerate PDEs.

\section{Introduction}
The evolution by  mean curvature flow is a  geometrical degenerate PDE broadly used in mathematics, see e.g. \cite{Ecker} and references therein for an overview on the subject.
Roughly speaking this describes the motion of an hypersurface contracting in the normal direction with (normal) velocity equal to  the mean curvature at that point. 
Unfortunately, even smooth surfaces  evolving by mean curvature flow
can develop singularities in finite time, 
so a weak notion  for this evolution is necessary.
The notion that we are considering here follows  a
nonlinear PDE-approach,  
based on Chen-Giga-Goto \cite{che} (see also \cite{giga})
and  
Evans-Spruck \cite{ev}. 
Roughly speaking, the idea consists in associating a 
PDE to a smooth hypersurface evolving   such 
that  the function  which solves this PDE has level 
sets which evolve by mean curvature flow. 
Then one can define the  solutions of the 
``generalized evolution by mean curvature flow'' as  
the zero-level sets of the viscosity solution of this PDE. This, so called, level set approach requires to solve (in the viscosity sense) a  degenerate parabolic PDE. 
In the last decades this evolution has been generalised to the case of sub-Riemannian geometries, i.e. to the study of hypersurfaces evolving by the so-called horizontal mean curvature flow (see \cite{{cap},{dirr}} and others). This is partially motivated by the sub-Riemannian modelling of the visual cortex applied for example to the study of image processing, developed by Citti-Sarti and al. (see e.g. \cite{{cisar},{cisar2}}).


Sub-Riemannian geometries are degenerate manifolds  where the 
Riemannian inner product is defined just on a sub-bundle of the
tangent bundle. To be more precise, we will consider $X_1,\dots,X_m$
smooth vector fields on $\R^n$  and a Riemannian inner product
defined on the distribution $\cH$ generated by such vector fields. 
Then
it is possible to define intrinsic derivatives of 
any order by taking the derivatives along the vector fields 
$X_1,\dots,X_m$. 
That allows us to write differential operators like Laplacian, 
infinite-Laplacian etc, using intrinsic derivatives. 
In particular we can write the level-set equation associated to evolution by horizontal mean curvature flow.
Even if different authors have studied this geometrical evolutions, many questions remain open due to the high degeneracy of the associated PDEs (see e.g. \cite{{cap},{dirr},{fer}}). 
To keep the computation easier and more explicit, in this paper we will focus only on the specific case of the Heisenberg group, which is the main model for a sub-Riemannian geometry. Still the approach works in the general case of Carnot-type vector fields, which in particular includes all Carnot groups.\\

A connection between   certain stochastic control problems
and a  large class of geometric evolution equations, including 
the (Euclidean) evolution by mean curvature flow,  has been found by
Buckdahn,  Cardaliaguet and Quincampoix 
in \cite{car} and  Soner and Touzi in \cite{{tou2},{tou}}.
The control, loosely speaking, constrains  the 
increments of the stochastic process
to a lower dimensional subspace of $\R^n,$ while the cost 
functional consists only of the terminal cost but
involves an essential supremum over the probability space.
It turns out that the value function solves
the level set equation associated with the geometric evolution. 
Moreover, one can show that the set of points from which 
the initial hypersurface can be reached
almost surely in a given time by choosing an appropriate 
control coincides with the set evolving by mean curvature flow. 
This stochastic approach generalizes very naturally to 
sub-Riemannian geometries by using
an intrinsic Brownian motion associated with the 
sub-Riemannian geometry. 

This approach can be used to obtain certain 
existence results in general sub-Riemannian manifolds. In particular, the value function
may be used for defining a generalized flow.
More precisely,
the value function $V$ associated to this stochastic 
control problem is 
defined as the  infimum, over the admissible controls, of the 
essential   supremum of the final cost $g$ 
(at some fixed terminal time $T>t$),   
for the controlled path $\xi^{\cv}$  starting from $x$ at the time $t$. 
We can show that 
$u(t,x ):=V(T-t,x)$ is a viscosity solution of 
the level set equation of the  evolution by horizontal 
mean curvature flow. 
So $\Gamma(t)=\{x\in \R^n\,|\, u(t,x)=0\}$ is a 
generalized evolution by horizontal 
mean curvature flow in general sub-Riemannian  manifolds.
This approach has been successfully used to study the evolution by horizontal mean curvature flow in general sub-Riemannian geometries by two of the authors, together with Max von Renesse in \cite{dirr}.
In that paper, following the approach in  \cite{car}, the authors
introduce a suitable  $p$-regularising stochastic optimal control problem, which does not degenerate when the (horizontal) gradient vanishes. The value functions $u_p$ associated to the $p$-problem do not converge as $p\to +\infty,$ but their $p-$th roots $u_p^{\frac{1}{p}}$ do, in a similar way as the $L^p$-seminorms of a measurable function converge to the essential supremum. This limit of  $u_p^{\frac{1}{p}}$  can be shown to solve in the viscosity sense the level set equation for the horizontal mean curvature flow in general sub-Riemannian geometries.

The aim of this paper is to understand better, at least at a formal level, the asymptotic behaviour of the optimal controls of these approximating control problems. 

For stochastic control problems, the optimal control is of feedback form. This means that there exist a function, depending on the value function and its derivatives, which selects the optimal control depending on the state of the system. This function is obtained by a point-wise optimization over the control space which, in the simplest case, is just the Legendre transform connecting Lagrangian and Hamiltonian, for details see e.g. \cite{Soner}

For standard control problems, it is possible to define a forward-backward system of stochastic ODEs which yield the path associated with the optimal control, see e.g \cite{Carmona}. This approach does not need the  derivatives of the value function, the price to pay is that we have a system which is forward-backward. In principle this would allow to reduce questions regarding the convergence of value functions  to convergence of a family of systems of stochastic ODEs.

Unfortunately, in the case here, the value functions doe not converge, only their p-th roots do. Re-writing everything depending on the $p$-th root of the value function would lead to a system of ODEs that still depends on the (in principle unknown) value function.
The good news is, however, that the convergence of the value function has been shown in \cite{dirr}, but without rate. Therefore our approach of studying the convergence of the Hamiltonian in $p$ nevertheless is able to shed some light on the behaviour of the approximating optimal controlled paths near characteristic points  of the limit problem.

This result gives also an idea on the structure of the optimal controls for the $p$-problem, which two of the authors are currently generalising to the case of Riemannian approximation for the horizontal mean curvature evolution, and it is crucial for showing the convergence of the stochastic approach for the Riemannian approximation to the value function solving the level-set equation in the horizantal case. These results will be contained in two follow-up papers in preparation.

The paper is organised as follows:\\
 In Section 2 we will give some definitions related to sub-Riemmanian geometries.
 In the Section 3 we will recall some known results about horizontal mean curvature flow, while in  Section 4 we focus on recalling the stochastic approach from \cite{dirr}.
 In Section 5  introduce  the $p$-Hamiltonian for a generic Carnot-type geometry.
 In the Section 6 we will obtain the structure of  the optimal control for the $p$-Hamiltonian, for large $p$ in the case of the Heisenberg group.
 Finally in Section 7 we will look at the behaviour of the optimal control for large $p$ near the characteristic points and give some numerical examples.

\section{Preliminary}

In order to prove our main result, we have to recall briefly the basic definitions about sub-Riemmanian geometries. For further remarks and definitions on this topic we refer to \cite{mont} and \cite{BLU} for the case of Carnot groups.\\

Let $M$ be a $n$-dimensional smooth manifold, we recall that  a \emph{distribution} is a subbundle of the tangent bundle, i.e. as vector space
\begin{equation*}
\mathcal{H} := \{ (x,v) | x \in M \ \ v \in \mathcal{H}_{x} \},
\end{equation*}
where   $\mathcal{H}_{x}$ is a subspace of  the tangent space $T_{x} M$ at  every point $x\in M$.\\
Given two vector fields  $X$,$Y$  defined on a  manifold $M$, we can consider   the bracket between $X$ and $Y$, that is the vector field 
acting on the  smooth functions $f:M \rightarrow \mathbb{R}$ as
\begin{equation*}
[X,Y](f)= XY(f) - YX(f).
\end{equation*}
Let us now consider  a family of vector fields $\mathcal{X} := \{ X_{1}, \dots , X_{m} \}$, we define the set of all the $k$-brackets of $\mathcal{X} $ as
\begin{equation*}
\mathcal{L}^{(k)} (\mathcal{X}) := \{ [X,Y] | X \in \mathcal{L}^{(k-1)}(\mathcal{X}), \ \ Y \in \mathcal{L}^{(1)}(\mathcal{X}) \},
\end{equation*}
with $\mathcal{L}^{(1)} (\mathcal{X})= \mathcal{X}$.
The associated Lie algebra is the set of all brakets between the vector fields of the family
\begin{equation*}
\mathcal{L}(\mathcal{X}) := \{ [X_{i} , X_{j}^{(k)} ] | X^{(k)}_{j} \in \mathcal{L}^{(k)} (\mathcal{X}), \ k \in \mathbb{N} \}.
\end{equation*}

We can now recall the  H\"{o}rmander condition.
\begin{definition} [H\"{o}rmander condition]
Let $M$ be a smooth manifold and $\mathcal{H}$ a distribution defined on $M$ and let  $\mathcal{X}$ be a family fo vector fields spanning $\mathcal{H}$. We say that the distribution is \emph{bracket generating}   if and only if, at any point, the Lie algebra $\mathcal{L}(\mathcal{X})$ spans the whole tangent space at that point.
Moreover we say that the family of vector fields $\mathcal{X}$ satisfy the \emph{H\"{o}rmander condition} if and only if there exists $r\in \N$ such that $T_xM=\bigcup_{k=1}^r\textrm{Span}\big(\mathcal{L}^{(k)} (\mathcal{X})(x)
\big)$; in this case the natural number $r$ is called step of the group.
The space $\mathcal{H}_x=\textrm{Span}\big( \mathcal{X}_1, \dots, \mathcal{X}_m\big)(x)$ is usually called {\em horizontal space} at the point $x$.
\end{definition}

In this setting not all the curves on the manifolds will be admissible.

\begin{definition}
Let $M$ be a smooth manifold and  $\mathcal{H}$ a bracket generating distribution defined on $M$ and  generating by a family of  vector fields  $\mathcal{X} = \{ X_{1}, \dots , X_{m} \}$.
Consider  an absolutely continuous curve  $\gamma:[0,T] \rightarrow M$, we say that $\gamma$ is a \emph{horizontal curve} if and only if
\begin{equation*}
\dot{\gamma}(t) \in \mathcal{H}_{\gamma(t)}, \ \ \mbox{for a.e} \ t \in [0,T]
\end{equation*}
or, equivalently, if there exists a measurable function   $h : [0,T] \rightarrow \mathbb{R}^{N}$ such that
\begin{equation*}
\dot{\gamma}(t) = \sum_{i=1}^{m} h_{i}(t) X_{i}(\gamma(t)), \ \ \mbox{for a.e} \ t \in [0,T],
\end{equation*}
where $h(t)=(h_{1}(t), \dots  , h_{m}(t))$.
\end{definition}


In the next example we will introduce the most significant and famous model in this setting: the Heisenberg group.

\begin{example}[The Heisenberg group] \label{a2}
 For a formal definition of the Heisenberg group and the connection between its structure as non commutative Lie group and its manifold structure we refer to \cite{BLU}.
Here we simply introduce the 1-dimensional  Heisenberg group as the geometries induced  on  $\mathbb{R}^{3}$ 
by the vector fields
\begin{equation*}
X_1(x)= \begin{pmatrix}
1\\ 0 \\ - \frac{x_{2}}{2}
\end{pmatrix}
\quad  \textrm{and} \quad
X_2=
\begin{pmatrix}
0\\1 \\ \frac{x_{1}}{2}
\end{pmatrix},
\quad \forall \ x=(x_1,x_2,x_3)\in \mathbb{R}^3.
\end{equation*} 
The horizontal space in this case is given by $H_x=\textrm{Span}\big(X_1(x),X_2(x)\big)$; so in particular at the origin the horizontal space is the plane $x_3=0$.\\
Note that the  above vector fields satisfy the H\"ormander condition with step 2: in fact $[X_1,X_2](x)=\begin{pmatrix}0\\0\\1\end{pmatrix}$
for any $x\in \mathbb{R}^3$.

\end{example}

From now on we consider only the case where the starting topological manifold $M$ is the Euclidean $\mathbb{R}^N$. \\

For later use we also introduce the matrix associated to the vector fields $X_{1} , \dots , X_{m}$, which is the
$N\times m$ matrix defined as
\begin{equation*}
\sigma(x)=[X_{1}(x), \dots , X_{m}(x)]^{T}.
\end{equation*}
\begin{example}
In the case of the Heisenberg group  introduced in the Example \ref{a2}, the matrix $\sigma$ is given by
\begin{equation} \label{hei}
\sigma(x)= \begin{bmatrix} 1 & 0 & - \frac{x_{2}}{2} \\ 0 & 1 & \frac{x_{1}}{2} \end{bmatrix},
\quad
\forall \, x=(x_{1},x_{2},x_{3}) \in \mathbb{R}^{3}.
\end{equation}
\end{example}
Moreover, in this paper we will concentrate on a sub-Riemannian geometries with a particular structure: the so called Carnot-type geometries.
In general, for Carnot-type geometries, the matrix $\sigma$ assumes the following structure:
\begin{equation*}
\sigma(x) = \begin{bmatrix} I_{m \times m} &  A(x_{1}, \dots x_{m}) \end{bmatrix},
\end{equation*}
where the matrix $A(x_{1}, \dots , x_{m})$ is a $(N-m) \times m$  depending only on the first $m$ components of $x$.\\
All Carnot groups are Carnot-type geometries (see e.g. \cite{BLU} for definitions and examples of Carnot groups).
The previous assumption on the structure of the vector fields will allow us to consider an easy and explicit  form fort the Riemannian approximation. Nevertheless the approach introduced apply also to the case where this additional structure is not fulfilled.

\section{Horizontal mean curvature flow}
Given a smooth hypersurface $\Gamma$ on $\R^N$ (or more in general on a $N$-dimensional manifold $M$), we indicate by $n_E(x)$ the standard (Euclidean) normal to $\Gamma$ at the point $x$. We now consider the vector fields   $X_{1}, \dots , X_{m}$ introduced in the previous section (i.e. spanning a bracket-generating distribution $\cH_x$), 
and we look at the unit vector obtained projecting the Euclidean normal on the distribution $\cH_x$ generated by $X_{1}, \dots , X_{m}$, see the following definition.
\begin{definition}[Horizontal normal] \label{c6d10}
Given a smooth hypersurface $\Gamma$ on $\R^N$ and a family of vector fields $X_{1}, \dots , X_{m}$  satisfying the H\"ormander condition, the \emph{horizontal normal} $n_{0}(x)$ is the renormalized projection of the Euclidean normal $n_E(x)$ on 
the horizontal space $\cH_x=\textrm{Span}\big(X_1(x),\dots ,X_m(x)\big)$.
\end{definition}
Since $n_{0}(x)\in \textrm{Span}\big(X_1(x),\dots ,X_m(x)\big)$ and the Riemannian inner product is introduced in such a way  $X_{1}, \dots , X_{m}$ are orthonormal, then, whenever the projection of $n_E(x)$ onto $\cH_x$ does not vanish,  there exists $h_1,\dots,h_m$ measurable functions such that
\begin{equation*}
n_{0}(x)= \frac{ h_1(x)X_1(x)+\dots +h_m(x)X_m(x)}{\sqrt{h^2_1(x)+\dots +h^2_m(x)}}\in \cH_x\subset \R^N , \ \ x \in \Gamma.
\end{equation*}

With an abuse of notation we sometimes identify $n_{0}$ with the associated $m$-valued vector 
\begin{equation}
\label{normal_coordinateVector}
n_{0}(x) \rightarrow 
\left(\frac{ h_1(x)}{\sqrt{h^2_1(x)+\dots +h^2_m(x)}},\dots, \frac{ h_m(x)}{\sqrt{h^2_1(x)+\dots +h^2_m(x)}}
\right)^T\in \R^m.
\end{equation}
The main difference between the standard  normal and the  horizontal normal is that the second  may not exist even for smooth hypersurfaces. In fact whenever the Euclidean normal is orthogonal to the horizontal plane $\cH_x$, then the horizontal normal cannot be introduced. The points where this  happens are called characteristic points, see the definition below.
\begin{definition}[Characteristic points] \label{CharacteristicPoints}
 Given a smooth hypersurface $\Gamma$ on $\R^N$, the characteristic points  occur whenever $n_E(x)$ is orthogonal to the horizontal plane $\mathcal{H}_x$, then its projection on such a subspace vanishes, i.e.
 $$
h^2_1(x)+\dots +h^2_m(x)=0.
 $$
\end{definition}

We recall that for every smooth hypersurface the \emph{mean curvature} at a point  is defined as the divergence of the Euclidean normal at that point.
Similarly, for every smooth hypersurface, we can now introduce the  horizontal mean curvature.
\begin{definition}
[Horizontal mean curvature]
\label{c6gl}
Given a smooth hypersurface $\Gamma$ and a non characteristic point $x\in \Gamma$, the 
 \emph{horizontal mean curvature} is defined as the horizontal divergence of the horizontal normal, i.e.
$
k_{0}(x) = div_{\mathcal{H}} n_{0}(x),
$
where $n_0(x) $ is the m-valued vector associated to the horizontal normal defined in \eqref{normal_coordinateVector}, while $div_{\mathcal{H}} $ is the divergence w.r.t. the vector fields $X_1,\dots, X_m$, i.e.
\begin{equation}
\label{horizontalMCF}
k_{0}(x) = X_1\left(\frac{h_1(x)}{\sqrt{\sum_{i=1}^mh_i^2(x)}}\right)+
\dots+
X_m\left(\frac{h_m(x)}{\sqrt{\sum_{i=1}^mh_i^2(x)}}\right) \ \ x \in \partial \Gamma .
\end{equation}
\end{definition}
Obviously the horizontal mean curvature is never defined at characteristic points, since there the  horizontal normal does not exist.\\

We can finally introduce the main definition of this section.

\begin{definition}[Evolution by mean curvature flow] \label{c6glo}
Let $\Gamma_t$ be a family of smooth hypersurfaces in $\R^N$, depending on a time parameter $t\geq 0$. 
We say that $\Gamma_{t}$ is an \emph{evolution by horizontal mean curvature flow} 
of some hypersurface $\Gamma$ if and only if $\Gamma_0=\Gamma$ and for any smooth horizontal  curve $\gamma: [0,T] \rightarrow \mathbb{R}^{N}$ such that $\gamma(t) \in \Gamma_{t}$ for all $t \in [0,T]$, 
the horizontal normal velocity $v_{0}$ is equal to minus the horizontal mean curvature, i.e.
\begin{equation} \label{sla}
v_{0}(\gamma(t)): = - k_{0}(\gamma(t)) n_0(\gamma(t)), \ \ t \in [0,T],
\end{equation}
where $n_{0}$ and $k_{0}$ as respectively the horizontal normal and the horizontal mean curvature introduced in Definitions \ref{c6d10} and \ref{c6gl}.
\end{definition}
We recall  that \eqref{sla} is not defined at characteristic points, then the need to develop a generalised notion of evolution by mean curvature flow which can deal with characteristic points and general singularities, as we will do in the next section following some very well-known approaches, already used to deal with singularities  in the Euclidean case.

\section{The level-set equation: a stochastic approach.} 
In this section we introduce the  level set equation for the (generalised) evolution by horizontal mean curvature flow, and a stochastic representation for the viscosity solutions of that equation.\\
 The level set approach for the Euclidean evolution was introduced by Evans and Spruck  in \cite{ev} and Chen, Giga and Goto in \cite{che}.
We briefly recall this approach directly for  the horizontal evolution introduced in \eqref{sla}, for more details see \cite{dirr} and \cite{cap}.
The basic  idea starts by parametrising all (smooth) hypersurface involved as zero level sets, i.e.
$$
\Gamma=\Gamma_0= \big\{x \in \mathbb{R}^{N}| u_0(x)= 0 \big\}
\quad
\textrm{and}
\quad
\Gamma_t= \big\{x \in \mathbb{R}^{N}| u(t,x)= 0 \big\},
$$ 
for some smooth function $u:[0,+\infty)\times \R^N\to \R$. From now on, we indicate the points $x\in \Gamma_t$ as $x(t)$. Then 
 the Euclidean normal is simply 
$n_E(x(t))= \frac{\nabla u(t,x(t))}{| \nabla u(t,x(t)) |}$, where the gradient is done only w.r.t. the space variable $x$, for all $x(t)\in \Gamma_t$. This implies that the horizontal normal (at non-characteristic points) can be expressed as
\begin{equation} \label{c6wi}
n_{0}(x(t))= \left( \frac{X_{1} u (t,x(t))}{\sqrt{\sum_{i=1}^{m} (X_{i} u(t,x(t)))^2}} , \dots , \frac{X_{m} u (t,x(t))}{\sqrt{\sum_{i=1}^{m} (X_{i} u(t,x(t)))^2}} \right) \in \mathbb{R}^{m},
\end{equation}
(where above we have simply identifies the normal with its coordinate vector in $\R^m$).
Note that $ D_{\mathcal{X}} u =(X_1u,\dots , X_mu)\in \R^m$ is the so called horizontal gradient, then $n_{0} = \frac{D_{\mathcal{X}} u}{|D_{\mathcal{X}} u|}$ and by $| \cdot|$ we indicate the standard Euclidean norm in $\mathbb{R}^{m}$. 
Similarly we can then write the horizontal mean curvature given in \eqref{horizontalMCF} as 

\begin{equation} \label{c6cru}
k_{0}(x(t))= \sum_{i=1}^{m} X_{i} \left( \frac{X_{i}u (t,x(x))}{\sqrt{\sum_{i=1}^{m} (X_{i} u(t,x(t)))^2}} \right) .
\end{equation}

Applying \eqref{c6wi} and \eqref{c6cru} to the Definition \ref{c6glo}, we obtain that, whenever $\Gamma_t$ satisfies the evolution \eqref{sla} and $|D_{\mathcal{X}} u|>0,$  then $u$ solves the following PDE:
\begin{equation} \label{horMCF}
u_{t} = Tr( (D^{2}_{\mathcal{X}} u)^{*}) - \bigg< (D^{2}_{\mathcal{X}} u)^{*} \frac{D_{\mathcal{X}} u}{|D_{\mathcal{X}} u|} , \frac{D_{\mathcal{X}} u}{|D_{\mathcal{X}} u|} \bigg>,
\end{equation}
where $(D^{2}_{\mathcal{X}}u)^{*}$  is the symmetric horizontal Hessian, that is
\begin{equation*}
((D^{2}_{\mathcal{X}} u)^{*})_{ij}:= \frac{X_{i}(X_{j}u) + X_{j}(X_{i}u)}{2}.
\end{equation*}
Equation \eqref{horMCF} was introduced and studied in  \cite{cap} and \cite{ev}. Different approaches lead to different ways t interpret the singularity $|D_{\mathcal{X}} u|,$ which happens even if the surface is smooth in the Euclidean sense. To see this, consider the unit sphere in three dimensions centred at $(0,0,1).$ In the Heisenberg geometry, the horizontal gradient to any level set function vanishes in the point $(0,0,1).$ This example will be used for some numerical illustrations in the final chapter.

Equation \eqref{horMCF} is very degenerate and in general the solutions will need to be interpreted in the sense of the viscosity solutions (see \cite{lio}, for a definition and properties).

Uniqueness of viscosity solutions is in full generality an open problem due to the presence of points where 
$|D_{\mathcal{X}} u|$ vnishes, i.e. characteristic points.

Here we concentrate on the stochastic approach  initiated independently  by Cardaliaguet, Buckdahn and Quincampoix in \cite{car} and by Soner and Touzi in \cite{{tou2},{tou3}}.
The same approach was later generalised by Dirr, Dragoni and Von Renesse in \cite{dirr}   to cover the horizontal case considered in this paper.
Roughly speaking the idea  consists in expressing the viscosity solution of the level set equation as value function of suitable associated stochastic controlled systems. This is made more precise in the following result.
\begin{theorem}[\cite{dirr}]
\label{theoremMainDDR}
Let $(\Omega,\mathcal{F},\{\mathcal{F}_{t} \}_{t \geq 0},\mathbb{P})$ be a filtered probability space and $B$ is a $m$-dimensional Browinian motion adapted to the filtration $\{\mathcal{F}_{t} \}_{t \geq 0}$.

Let $g: \mathbb{R}^{N} \rightarrow \mathbb{R}$ be a bounded and H\"older  function. Let us consider $T>0$. For any $(t,x)  \in [0,T] \times {\mathbb R}^{N}$, we define
\begin{equation}\label{c6e200}
V (t,x) = \inf _{v \in \mathcal{A}}  ess \sup_{\Omega} g(\xi^{t,x, \nu}(T)) ,
\end{equation}
 where
\begin{equation}
\mathcal{A} = \{ \nu \in Sym(m) | \ \nu \geq 0 \ \  I_{m} - \nu^{2} \geq 0, \ \ Tr(I_{m} - \nu^{2})=1 \},
\end{equation}
and $\xi^{t,x, \nu}$ are the solution of the stochastic controlled dynamics
\begin{equation}
\begin{cases} d \xi^{t,x, \nu}(s) = \sqrt{2} \sigma^{T} ( \xi^{t,x, \nu} (s)) \circ dB^{\nu}(s), \  \  \ \ \ s \in (t,T],  \\ dB^{\nu}(s) = \nu(s) dB_{m}(s), \ \ \ \  \ \   \ \ \ \ \ \ \ \ \ \  \ \ \ \ \ \ \    s \in (t,T], \\ \xi^{t,x, \nu}(t) = x. \end{cases}
\end{equation}
Let  also us assume that the matrix $\sigma$ is $m \times N$ H\"ormander matrix with smooth coefficients and that $\sigma$ and $\sum_{i=1}^{m} \nabla_{X_{i}} X_{j}$ are Lipschitz functions.
Then the value function $V$ defined in \eqref{c6e200} is a viscosity solution of the level set equation  \eqref{horMCF}. Here $\circ$ denote the Stratonovich differential.
\end{theorem}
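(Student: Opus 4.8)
The plan is to derive \eqref{horMCF} from a dynamic programming principle (DPP), handling the essential-supremum cost in the worst-case / stochastic-target spirit of Soner--Touzi and Buckdahn--Cardaliaguet--Quincampoix rather than through expectations. The first ingredient is the infinitesimal generator of the controlled dynamics. Writing the system in It\^o form, the diffusion matrix of $\xi^{t,x,\nu}$ is $2\sigma^{T}\nu^{2}\sigma$, and the crucial feature is that the \emph{Stratonovich} correction, which is well defined precisely because $\sigma$ and $\sum_{i=1}^{m}\nabla_{X_{i}}X_{j}$ are assumed Lipschitz, recombines with the second-order part so that for a smooth test function $\phi$ the generator becomes intrinsic,
\begin{equation*}
\mathcal{L}^{\nu}\phi=\mathrm{Tr}\big(\nu^{2}(D^{2}_{\mathcal{X}}\phi)^{*}\big),
\end{equation*}
that is, it sees only the symmetric horizontal Hessian and not the ambient Euclidean one. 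This is the step that makes the geometry of $\cH$, rather than of $\R^{N}$, surface in the limit.

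Next I would establish the DPP in the form
\begin{equation*}
V(t,x)=\inf_{\nu\in\mathcal{A}}\;\ess\,\sup_{\Omega}\,V\big(t+h,\xi^{t,x,\nu}(t+h)\big),\qquad 0<h<T-t,
\end{equation*}
together with local boundedness and continuity of $V$, which follow from the boundedness and H\"older continuity of $g$ and from standard estimates on the controlled flow; these guarantee that the upper and lower semicontinuous envelopes $V^{*},V_{*}$ are finite and admissible as viscosity test objects. The heart of the matter is then the identification of the Hamiltonian, where the essential supremum performs the geometric selection. Fix a smooth $\phi$ and a point where $|D_{\mathcal{X}}\phi|\neq0$, and set $\hat{n}=D_{\mathcal{X}}\phi/|D_{\mathcal{X}}\phi|$. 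By It\^o--Stratonovich the martingale part of $s\mapsto\phi(\xi^{\nu}(s))$ has volatility vector $\sqrt{2}\,\nu\,D_{\mathcal{X}}\phi$; since Brownian fluctuations are of order $\sqrt{h}$ while the drift is of order $h$, an essential supremum over $\Omega$ can stay finite and comparable to $\phi$ itself only for controls that annihilate this volatility, i.e. with $\nu\,D_{\mathcal{X}}\phi=0$. For $\nu\in\mathcal{A}$ this forces $\nu^{2}\hat{n}=0$, so $P:=I_{m}-\nu^{2}$ fixes $\hat{n}$; combined with $0\le P\le I_{m}$ and $\mathrm{Tr}(P)=1$ this determines $P=\hat{n}\hat{n}^{T}$ uniquely. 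Substituting back,
\begin{equation*}
\mathcal{L}^{\nu}\phi=\mathrm{Tr}\big((I_{m}-\hat{n}\hat{n}^{T})(D^{2}_{\mathcal{X}}\phi)^{*}\big)=\mathrm{Tr}\big((D^{2}_{\mathcal{X}}\phi)^{*}\big)-\big\langle(D^{2}_{\mathcal{X}}\phi)^{*}\hat{n},\hat{n}\big\rangle,
\end{equation*}
which is exactly the right-hand side of \eqref{horMCF}. Feeding this into the DPP and letting $h\to0$ yields the two viscosity inequalities: the subsolution inequality from near-optimal controls, which must be asymptotically tangent, and the supersolution inequality from testing with the admissible tangent control $\nu^{2}=I_{m}-\hat{n}\hat{n}^{T}$. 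As in the introduction the natural output is the backward equation for $V$, equivalently \eqref{horMCF} for $u(t,x)=V(T-t,x)$.

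I expect two places to require genuine care. First, because the cost is an essential supremum and not an expectation, one cannot simply take expectations in It\^o's formula; the DPP and the passage $h\to0$ must be carried out pathwise, controlling the worst-case realizations, which is the technically heaviest part and is the content of the geometric dynamic programming of Soner--Touzi. Second, the Hamiltonian is singular exactly at the \emph{characteristic points} of Definition~\ref{CharacteristicPoints}, where $D_{\mathcal{X}}\phi=0$ and $\hat{n}$ is undefined; there the selection argument above breaks down and the equation must be interpreted through the upper and lower semicontinuous envelopes of the operator (discontinuous in the gradient variable), as in the Euclidean level-set theory of Evans--Spruck and Chen--Giga--Goto. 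This degeneracy, compounded by the sub-Riemannian vanishing of the horizontal gradient, is the main obstacle, and it is precisely the phenomenon analysed in the remainder of the paper.
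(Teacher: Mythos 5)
Your route --- a direct dynamic programming argument for the $\mathrm{ess\,sup}$ value function --- is genuinely different from the one this paper relies on, and it has gaps at exactly the steps that route is known to make hard. First, your key selection step is wrong as stated: since $g$ is bounded, $\mathrm{ess\,sup}_\Omega\, g(\xi^{t,x,\nu}(T))\le \|g\|_\infty$ for \emph{every} admissible control, so finiteness (or comparability) of the cost forces nothing about $\nu D_{\mathcal{X}}\phi$. The correct heuristic is about optimality: a control leaving a volatility component along the gradient incurs a worst-case increment of order $\sqrt h$, which beats the $O(h)$ drift, so near-optimal controls must asymptotically annihilate it. Making that rigorous requires (i) a dynamic programming principle for the $\mathrm{ess\,sup}$ cost and (ii) a quantitative pathwise lower bound on the worst-case realizations of the martingale part for arbitrary \emph{adapted} $\nu(\cdot)$, not the constant $\nu$ of your generator computation. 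Neither is supplied: $\mathrm{ess\,sup}$ does not tower under conditioning the way expectations do, so the standard pasting and measurable-selection proof of the DPP fails, and indeed neither \cite{car} nor \cite{dirr} proves a DPP for $V$ directly --- this is precisely why both introduce the $L^p$ value functions $V_p(t,x)=\inf_{\nu\in\mathcal{A}}\mathbb{E}[g^p(\xi^{t,x,\nu}(T))]^{1/p}$ of Section 5, which enjoy a bona fide DPP since the cost is an expectation, show that $V_p$ solves the equation with Hamiltonian $H_p$ from \eqref{hami21}, and then recover Theorem \ref{theoremMainDDR} by a stability argument as $p\to\infty$ (the $(p-1)r^{-1}\mathrm{Tr}[\nu\nu^Tqq^T]$ term is exactly the regularised version of your volatility-killing constraint). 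Deferring the hard step to ``the geometric dynamic programming of Soner--Touzi'' does not close it: that machinery is formulated for stochastic target (almost-sure reachability) problems, and transferring it to the present $\mathrm{ess\,sup}$ cost is a substantial argument, not a citation.

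Second, the theorem asserts that $V$ is a viscosity solution of \eqref{horMCF} everywhere, including characteristic points, where $D_{\mathcal{X}}u=0$ and the nonlinearity is discontinuous --- and, as recalled after Theorem \ref{theoremMainDDR}, the relevant sub- and supersolution conditions there involve the minimal, respectively maximal, eigenvalue of the horizontal Hessian. Your sketch concedes that the selection argument breaks down at such points and gestures at semicontinuous envelopes without verifying either inequality, so the proposal leaves the theorem unproven exactly where the degeneracy lies and where this paper's analysis lives. To your credit, the pieces you do compute are sound: the Stratonovich formulation does yield the intrinsic generator $\mathrm{Tr}\big(\nu\nu^T(D^2_{\mathcal{X}}\phi)^*\big)$, since differentiating along the columns of $\sigma^T\nu$ produces $\sum_{i,j}(\nu\nu^T)_{ij}X_iX_j\phi$, and your linear algebra showing that $\nu\hat n=0$ together with $\nu\in\mathcal{A}$ forces $I_m-\nu^2=\hat n\hat n^T$ is correct and explains why \eqref{horMCF} is the right limiting operator. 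But these constitute the heuristic skeleton only; the actual proof, in \cite{dirr} following \cite{car}, runs through the $p$-regularising problem, which is what renders the DPP, the $\sqrt h$-versus-$h$ comparison, and the characteristic-point analysis tractable.
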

The approach to prove the theorem above is classical and follows the ideas introduce in \cite{car} for the standard (Euclidean) evolution, which means that require to first consider a more regular problem known as $p$-regularizing problem, which we introduce in the next section.

At characteristic points, (i.e. $|D_{\mathcal{X}} u|=0,$) the approach in \cite{dirr} yields a discontinuous nonlinearity (different for sub-and supersolutions): for subsolutions we get that $u_t$ equals the minimal eigenvalue of the horizontal Hessian, while  for supersolutions we get the maximal eigenvalue. The corresponding control would be a projection on the respective eigenspace. One result here (see Remark \ref{Rem64}) is a refinement of this conclusion: the optimal control is not unique if both eigenvalues are equal, otherwise it is the projection on the eigenspace of the {\em maximal } eigenvalue.

\section{The $p$-regularizing problem.}
To show directly that the  value function $V$ defined in \eqref{c6e200} is a viscosity solution on Equation  \eqref{horMCF} is extremely hard, the two main difficulties being that the PDE is highly degenerate and the value function is a $L^{\infty}$-norm. The idea from  \cite{car}  is to consider the value function associated to $L^p$-norm  approximating  (at leats on bounded sets)\eqref{c6e200}, which we indicate by $V_p$, and show that this new value function solves in the viscosity sense the  corresponding PDE. Then we can recover the result given in Theorem \ref{theoremMainDDR}
by a  limit-argument as $p\to +\infty$.
Note that   the $p$-problem associated to the new value function $V_p$ is far more regular than the level set equation  \eqref{horMCF} i
 and in fact the associated $p$-Hamiltonian  has no points of discontinuity.
For all $1 < p < + \infty$ we define the $p$-value function as
\begin{equation}
V_{p}(t,x):= \inf_{\nu \in \mathcal{A}} \mathbb{E}[ g^{p}(\xi^{t,x, \nu}(T))]^{\frac{1}{p}}.
\end{equation}
where $\xi^{t,x, \nu}$ and $\mathcal{A}$ are defined as in  Theorem \ref{theoremMainDDR}.
Then one can show (see \cite{dirr}) that $V_{p}$ solves in the viscosity sense 
\begin{equation}
\begin{cases} -(V_{p})_{t} + H_{p}(V_{p}, DV_{p}, D^{2}V_{p}) = 0,  \ \  x \in \mathbb{R}^{n}, \ t \in [0,T), \\ V_{p}(T, x) = g(x), \ \ \ \ \ \ \ \ \ \ \ \ \ \ \ \ \ \ \ \ \ \ \ \   x \in \mathbb{R}^{n}, \end{cases}
\end{equation} 
where the $p$-Hamiltonian $H_{p}$ is defined as
\begin{equation} \label{hami21}
H_{p}(r,q, M):= \sup_{\nu \in \mathcal{A}} \bigg[-(p-1) r^{-1} Tr[\nu \nu^{T} q q^{T}] + Tr[\nu \nu^{T} M]\bigg],
\end{equation}
and $q \in \mathbb{R}^{m}$ and $M \in Sym(m)$ where $\sigma$ is the matrix in \eqref{hei}. \\ 

The aim of this article is to find information on the structure of the optimal control for the $p$-Hamiltonian associated to the $p$-Hamilton-Jacobi equation regularising the the level set equation for the evolution by horizontal mean curvature flow.\\
For sake of simplicity, we now introduce the following function
\begin{equation}\label{littleh}
h_{p}(r,q,M, \nu):= -(p-1)r^{-1} Tr[\nu \nu^{T} q q^{T}] + Tr[\nu \nu^{T} M],
\end{equation}
so that the $p$-Hamiltonian can be rewritten as
\begin{equation} \label{gener}
H_{p}(r,q, M) = \sup_{\nu \in \mathcal{A}} h_{p}(r,q,M, \nu).
\end{equation}

As proved in \cite{tou2}, finding the infimum of the optimal controls for the solution $V_{p}$ is equivalent to optimise the supremum of the Hamiltonian as defined in \eqref{hami21}.
Then we will concentrate in finding the structure of the optimal controls giving the supremum in 
\eqref{gener}, for $p$ large enough.\\

We conclude this section with the following two remarks, which will be very useful for the later results.
\begin{remark}
It is possible (see \cite{car}) to rewrite the set of admissible controls as
\begin{equation} \label{mass}
\mathcal{A}= Co \{  \nu = I_{m} - a \otimes a, |\, a\in \R^m, \ \ |a|=1 \}.
\end{equation}
\end{remark}

\begin{remark}
Note that it is possible to consider $r > 0.$ As we are not interested in the value function itself, but only in its level sets, we can use as initial datum a positive function, e.g. $u_0(x)=1+\tanh ({\rm dist}(x,\Sigma_0)).$
By the comparison principle, this will remain positive. Note that the level of interest here will be the 1-level set, not the zero level set, which is empty. For comparison principles for a large set of hypersurfaces in the Heisenberg group we refer to \cite{fer}.
\end{remark}

\section{Optimal control for the $p$-Hamiltonian in $\mathbb{H}^{1}$}

In this section we will prove the main results of the paper. We will focus on the optimal control for the $p$-Hamiltonian defined in \eqref{hami21} and, in order to keep the computations easier, we will consider only  the case of the 1-dimensional  Heisenberg group, introduced in Example \ref{a2}.
We will divide our investigation in two separate cases: the case $q \neq 0$ and the case $q=0$. Remember that the case $q=0$ corresponds to the case when the horizontal gradient vanishes, which is associated to the characteristic points introduced in Definition \ref{CharacteristicPoints}.
Let us recall that, in the case of the 1-dimensional Heisenberg group, $N=3$ and $m=2$. \\

Next we introduce  the main idea:
note that it is possible to express any admissible control as $\nu = I_{2} - a \otimes a$ with $|a|=1$ (see \eqref{mass}).  Moreover a generic unit vector $a$ can be expressed by rotating any given fixed direction. Therefore, fixed an initial direction, to maximise the supremum  in \eqref{hami21}
on the set of all admissible controls  $ \mathcal{A}$
 can be reduced to maximise the same function $h_p$ among the rotational angle  $\theta\in [0, 2 \pi)$, which will be  much easier. Since this can be done starting from any fixed direction, we will rotate exactly the direction which we know to be associated to the optimal problem for the limit case $p=+\infty$, that is $\frac{q}{|q|}$, (remember  $q$ is the gradient variable). For more details on the optimal control in the case $p=+\infty$ we refer to \cite{dirr}. This of course cannot be done whenever $q=0$, then we will treat that case later, rotating a different starting direction.

 \subsection{Case $q \neq 0$: non-characteristic points.}
 Now let us fix the variables $r$,$q$,$M$ with $q\neq 0$. 
For sake of simplicity we also assume that $M$ is a diagonal matrix, i.e. there exist $\lambda_1,\lambda_2\in \R$ such that
\begin{equation} \label{bre1}
M= \begin{bmatrix} \lambda_{1} & 0 \\ 0 & \lambda_{2} \end{bmatrix}.
\end{equation}
In the next remark we highlight  that the assumption above on  $M$ is indeed not restrictive.
\begin{remark}
\label{diagonalMatrix}
This optimisation is taken at a fixed point $x$ and depending on variable $r,q,$ and $M.$ When writing these quantities explicitly, i.e. by specifying their entries, we implicitly refer to a coordinate system on $\R^2.$ We may choose  an orthonormal coordinate system at this point. For simplicity we choose it in such a way that the symmetric matrix $M$ is diagonal.

Note that for $h_p$ as in \eqref{littleh} it holds that
$$h_p(r,q,M,\nu)=h_p(r, {\mathcal O}q, {\mathcal O}M{\mathcal O}^T, {\mathcal O}\nu)$$ 
for any orthonormal matrix ${\mathcal O},$ i.e.
such that ${\mathcal O}{\mathcal O}^T=I:$ 

For two vectors $v_1$ and $v_2$ we have
$${\mathcal O}v_1({\mathcal O}v_2)^T={\mathcal O}(v_1v_2^T){\mathcal O}^T,
$$ and for two matrices $A$ and $B$ we have 
$$
Tr\big(({\mathcal O}A{\mathcal O}^T)(\mathcal {O}B{\mathcal O}^T)\big)=
 Tr({\mathcal O}(AB){\mathcal O}^T)=
 Tr ({\mathcal O}^T{\mathcal O} (AB))=Tr(AB).
$$

\end{remark}

We now introduce the following unit vector in polar coordinates:
\begin{equation} \label{ja}
n_{\infty}:= \frac{q}{|q|} = \begin{bmatrix} \cos \alpha \\ \sin \alpha \end{bmatrix},
\end{equation}
for a suitable associated angle $\alpha \in [0, 2 \pi)$ fixed.

 We now express any admissible control $\nu\in \mathcal{A}$ as
  \begin{equation}
  \label{Max}
  \nu=\nu_{\theta} \in \mathcal{A} \Longleftrightarrow \nu_{\theta} = I_{2} - n_{\theta} \otimes n_{\theta},
 \end{equation}
 where
   \begin{equation}
   \label{www21}
\begin{aligned} 
n_{\theta} := R_{\theta} n_{\infty} = \begin{bmatrix} \cos \theta & - \sin \theta \\ \sin \theta & \cos \theta \end{bmatrix} \begin{bmatrix} \cos \alpha \\ \sin \alpha \end{bmatrix}  = \begin{bmatrix} \cos (\theta + \alpha) \\ \sin( \theta + \alpha) \end{bmatrix}, \ \  \mbox{for $ \theta \in[0, 2 \pi)$}.
\end{aligned}
 \end{equation}
Using \eqref{Max}, we can rewrite
 all the admissible control $\nu\in \mathcal{A}$ as
\begin{equation} \label{ca2}
\nu_{\theta} := I_{2} -  n_{\theta} \otimes n_{\theta} = \begin{bmatrix} \sin^{2}(\theta + \alpha) & - \sin(\theta + \alpha) \cos (\theta + \alpha) \\ - \sin(\theta + \alpha) \cos (\theta + \alpha) &   \cos^{2}(\theta + \alpha)  \end{bmatrix}.
\end{equation}
Moreover $\nu_{\theta}=\nu_{\theta+\pi}$, then we can restrict our attention to $ \theta \in[0,  \pi)$.\\
It is also easy to check that $\nu_{\theta}$ is indeed an admissible control-matrix, in fact it is a symmetric projection matrix, in fact: $\nu_{\theta}=\nu^{T}_{\theta}$, which trivially  implies $\nu_{\theta} \nu^{T}_{\theta} = \nu^{2}_{\theta}$. 
Moreover for any generic vector $q \in \mathbb{R}^{2}$, we have
\begin{equation*} 
\nu^{2}_{\theta} q= \nu_{\theta} (q - n_{\theta} < n_{\theta}, q >) = \nu_{\theta} q - <n_{\theta} , q> ( n_{\theta} - n_{\theta} <n_{\theta} , n_{\theta}>)   = \nu_{\theta} q.
\end{equation*} 
Given  a diagonal matrix $M$ as in \eqref{bre1}, and using that $\nu_{\theta}$ is symmetric and a projection matrix,
 then
 \begin{equation} 
  \label{da}
Tr(\nu_{\theta} \nu^{T}_{\theta} M)= Tr( \nu^{2}_{\theta} M) = Tr( \nu_{\theta} M) =  \lambda_{1} (\nu_{\theta})_{11}+\lambda_{2} (\nu_{\theta})_{22},
\end{equation} 
where by $(\nu_{\theta})_{ij}$ we indicate the coefficient in position $(i,j)$ of the matrix $\nu_{\theta}$.\\

The next  remarks will be useful for  the later proofs of the main results.

\begin{remark}  
Given any admissible control $\nu_{\theta}$, expressed by   \eqref{ca2},  for every given vector $q \in \mathbb{R}^{2}$, we have
\begin{equation*}
\nu_{\theta} q = q - n_{\theta} < n_{\theta} , q>.
\end{equation*}
\end{remark}

\begin{remark}
For all $q \in \mathbb{R}^{2}$, we deduce
\begin{equation}\label{luc}
Tr( \nu_{\theta} \nu^{T}_{\theta} q q^{T}) = Tr(\nu_{\theta} q \otimes \nu_{\theta} q)  = | \nu_{\theta} q|^{2}.
\end{equation}
\end{remark}
\begin{remark} \label{vi}
Given any $q \in \mathbb{R}^{2}$ and by using  \eqref{luc}, we have
\begin{align*}
|\nu_{\theta}q|^{2} \! = \! |q|^{2} \! - \! 2 \! <q, n_{\theta}> <q, n_{\theta}> \! + \! <q, n_{\theta}>^{2} = |q|^{2} - <q, n_{\theta}>^{2}.
\end{align*}
\end{remark}
We can now find the structure of the optimal controls for large $p$.

\begin{theorem} \label{tub}
Let us consider the $p$-Hamiltonian $H_p$ 
 introduced 
in \eqref{hami21}.
 Fixed $r$, $q$  and $M$ and assume that  $q \neq 0$ and that $M$ is a diagonal matrix as in \eqref{bre1}.
 Then, for large $p$, the optimal control  is $\overline{\nu}=\nu_{\overline{\theta}}$,  where  $\nu_{\theta}$ is defined in \eqref{ca2} and
\begin{equation}
\overline{\theta} = \overline{C}\; \frac{1}{ p} + O \bigg( \frac{1}{p^{2}} \bigg), 
\end{equation}
  with $\overline{C}=
\frac{C_{1}}{C_{2} }$, $C_{1} = (\lambda_{1} - \lambda_{2}) \sin 2 \alpha$,
$\alpha$ is defined in \eqref{ja},
 and $C_{2}= 2 r^{-1} |q|^{2}>0$.
\end{theorem}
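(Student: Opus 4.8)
The plan is to reduce the supremum defining $H_p$ to a one--variable maximisation over the rotation angle $\theta\in[0,\pi)$ and then to extract the leading asymptotics of the maximiser by the implicit function theorem.

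First I would substitute the parametrisation \eqref{ca2} into $h_p$ and use the preceding remarks to obtain an explicit scalar function of $\theta$. By Remark \ref{vi}, together with $\langle q,n_\theta\rangle=|q|\cos\theta$ (which holds because $n_\theta=R_\theta n_\infty$ with $n_\infty=q/|q|$, and rotations preserve inner products), the first term of $h_p$ becomes $-(p-1)r^{-1}|q|^2\sin^2\theta$; by \eqref{da} the second term becomes $\lambda_1\sin^2(\theta+\alpha)+\lambda_2\cos^2(\theta+\alpha)$. Hence maximising $h_p$ over $\cA$ is equivalent to maximising
$$
\widetilde h_p(\theta):=-(p-1)r^{-1}|q|^2\sin^2\theta+\lambda_1\sin^2(\theta+\alpha)+\lambda_2\cos^2(\theta+\alpha)
$$
over $\theta\in[0,\pi)$.

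Next I would locate the maximiser. Differentiating gives
$$
\widetilde h_p'(\theta)=-(p-1)r^{-1}|q|^2\sin 2\theta+(\lambda_1-\lambda_2)\sin 2(\theta+\alpha),
$$
so critical points solve $(p-1)r^{-1}|q|^2\sin 2\theta=(\lambda_1-\lambda_2)\sin 2(\theta+\alpha)$. Since the right-hand side stays bounded while the coefficient on the left grows like $p$, any critical point near the maximum must satisfy $\sin 2\theta=O(1/p)$; moreover the dominant term $-(p-1)r^{-1}|q|^2\sin^2\theta$ is non-positive and vanishes on $[0,\pi)$ only at $\theta=0$, so for $p$ large the global maximiser is trapped in a shrinking neighbourhood of $\theta=0$. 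Setting $\delta:=1/(p-1)$ and $G(\theta,\delta):=r^{-1}|q|^2\sin 2\theta-\delta(\lambda_1-\lambda_2)\sin 2(\theta+\alpha)$, one has $G(0,0)=0$ and $\partial_\theta G(0,0)=2r^{-1}|q|^2=C_2>0$ (this is where $q\neq 0$ is essential), so the implicit function theorem yields a smooth branch $\theta(\delta)$ with $\theta(0)=0$. Implicit differentiation gives $\theta'(0)=-\partial_\delta G(0,0)/\partial_\theta G(0,0)=C_1/C_2=\overline C$, whence, after expanding $1/(p-1)=1/p+O(1/p^2)$, one obtains $\overline\theta=\overline C\,\delta+O(\delta^2)=\overline C/p+O(1/p^2)$.

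Finally I would confirm that this critical point is a genuine maximum. Evaluating the second derivative at $\overline\theta$ gives $\widetilde h_p''(\overline\theta)=-2(p-1)r^{-1}|q|^2\cos 2\overline\theta+O(1)$, which is strictly negative for $p$ large because $\cos 2\overline\theta\to 1$; combined with the localisation argument above, this shows $\overline\theta$ is the global maximiser. The main obstacle I anticipate is not the local expansion, which is a routine application of the implicit function theorem, but the global-maximum justification: making the ``localisation'' near $\theta=0$ quantitative and ruling out competing critical points away from the origin. Some care is also needed to verify that the $O(\delta^2)$ remainder of the implicit branch translates correctly into the stated $O(1/p^2)$ error.
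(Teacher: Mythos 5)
Your proposal is correct, and its skeleton coincides with the paper's: you perform the same reduction of the supremum over $\mathcal{A}$ to a one-variable problem via the extreme-point parametrisation $\nu_\theta=I_2-n_\theta\otimes n_\theta$ of \eqref{ca2}, arrive at exactly the scalar function $f_p(\theta)$ of \eqref{LargeP}, take the same derivative, and localise the maximiser near $\theta=0$. Where you genuinely diverge is in extracting the asymptotics of the critical point. The paper substitutes the ansatz $\theta=\beta/p$, Taylor-expands the trigonometric terms, and solves the linearised stationarity equation explicitly, obtaining $\theta_p=C_1/(C_2p+C_3)$ and then expanding in $1/p$; you instead rescale the stationarity equation by $(p-1)$, set $\delta=1/(p-1)$, and apply the implicit function theorem to $G(\theta,\delta)=r^{-1}|q|^2\sin 2\theta-\delta(\lambda_1-\lambda_2)\sin 2(\theta+\alpha)$ at $(0,0)$, reading off $\theta'(0)=C_1/C_2=\overline{C}$ from $\partial_\theta G(0,0)=2r^{-1}|q|^2$ and $\partial_\delta G(0,0)=-C_1$. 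Your route buys three things: (i) clean constant-tracking --- the paper's intermediate expansion drops factors of $2$ (it linearises $\sin 2\theta$ as $\theta$ and sets $C_2=r^{-1}|q|^2$ mid-proof, inconsistent with the stated $C_2=2r^{-1}|q|^2$, which your computation recovers directly); (ii) an actual justification that the critical point is the global maximiser, via coercivity of the dominant term $-(p-1)r^{-1}|q|^2\sin^2\theta$, local uniqueness of the implicit branch, and the second-derivative sign, where the paper only remarks informally that for $\theta\approx 0$ one ``gets the maximum''; (iii) rigorous remainder control through smoothness of $\theta(\delta)$. What the ansatz buys in exchange is an explicit next-order correction (the constant $C_3$). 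Two small points to tidy: your localisation statement should be read modulo the identification $\nu_\theta=\nu_{\theta+\pi}$, since on $[0,\pi)$ the set $\{\sin^2\theta<\epsilon\}$ also contains angles near $\pi$, which the periodicity folds into a two-sided neighbourhood of $0$ --- exactly where the implicit branch lives, and relevant when $\overline{C}<0$; and, like the paper, you optimise only over the extreme points $I_2-a\otimes a$ of the convex hull \eqref{mass} rather than over all of $\mathcal{A}$, a step both treatments take without comment.
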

\begin{proof}

By \eqref{da} and Remark \ref{vi} we can rewrite the function $h_p$ introduced in \eqref{littleh} as 
 \begin{equation}
 \label{rowena}
h_{p}(r,q, M , \nu_{\theta})= -(p-1) r^{-1} (|q|^{2} - <q, n_{\theta}>^{2}) +  \lambda_{1} (\nu_{\theta})_{11} + \lambda_{2} (\nu_{\theta})_{22} ,
\end{equation}
where $\nu_{\theta}$ is any admissible control expressed as in  \eqref{ca2}. 
We observe that the first term in \eqref{rowena} can be written as
\begin{equation*}
-(p-1) r^{-1}(|q|^{2} - <n_{\theta} , q>^{2}) =- (p-1) r^{-1} |q|^{2} (1 - <n_{\theta} , n_{\infty} >^{2}),
\end{equation*}
where $n_{\infty}$ is the vector introduced in \eqref{ja} (remember that $n_{\infty}$  depends on the fixed vector $q$). Then $h_p$ becomes
\begin{align*} 
h_{p}(x,r,q,M, \nu_{\theta}) = (p-1) r^{-1} |q|^{2} (1-<n_{\theta} , n_{\infty}>^{2}) + \lambda_{1}\sin^{2} (\alpha + \theta) + \lambda_{2}\cos^{2}(\alpha + \theta).
\end{align*}
Recalling the definitions of $n_{\infty}$ and $n_{\theta}$, given respectively in  \eqref{ja} and  in \eqref{www21}, we compute 
\begin{align*}
<n_{\theta} , n_{\infty}>^{2} &=  ( \cos (\alpha + \theta) \cos \alpha + \sin (\alpha + \theta) \sin \alpha)^{2}  \\ &=( \cos^{2} \alpha \cos \theta - \sin \alpha \cos \alpha \sin \theta + \sin \alpha \cos \alpha \sin \theta + \sin^{2} \alpha \cos \theta)^{2} \nonumber  \\ &= (\sin^{2} \alpha + \cos^{2} \alpha)^{2} \cos^{2} \theta   = \cos^{2} \theta.  
\end{align*}
Thus $1-<n_{\theta} , n_{\infty}>^{2}=1-\cos^{2} \theta=\sin^{2} \theta$,
and the function $h_p$ simplify as below:
\begin{equation}
\label{LargeP}
h_{p}(x,r,q,M, \nu_{\theta}) =  - (p-1) r^{-1} |q|^{2} \sin^{2} \theta + \lambda_{1} \sin^{2}(\theta + \alpha) + \lambda_{2} \cos^{2}(\theta + \alpha) .
\end{equation}
We need to find the supremum of $h_p$ among all the admissible controls, i.e. among all $\theta\in[0,\pi)$,
hence we look at the stationary points. \\
For sake of simplicity, fixed $x,r,q,M$, we introduce the following notation:
$$
f_p(\theta):=h_{p}(x,r,q,M, \nu_{\theta}).$$

Taking the derivative, we find
\begin{equation*}
\begin{aligned}
f'_{p}(\theta) =&- 2(p-1) r^{-1} |q|^{2} \sin \theta \cos \theta  +2(\lambda_{1} - \lambda_{2}) \cos (\alpha + \theta) \sin (\alpha + \theta)
\\ 
 =& -(p-1) r^{-1} |q|^{2} \sin 2 \theta + (\lambda_{1} - \lambda_{2}) \sin(2 \theta + 2 \alpha). 
\end{aligned}
\end{equation*}
Note that for large $p$ the stationary points occur for $\theta$ near 0 or $\theta$ near $\frac{\pi}{2}$. Looking at the function $
f_p$  we can see that for $\theta\approx 0$ we get the maximum while for $\theta\approx \frac{\pi}{2}$ we select the minimum.
To find the zero of  $
f'_p$
 we introduce a suitable linearisation for the derivative function $f_{p}'$.
 The previous remark suggests us the following 
 ansatz:
 $$\theta= \frac{\beta}{p}.$$
We  are now going to use the Taylor expansion of $\cos   \frac{\beta}{p} $ and $\sin  \frac{\beta}{p}$, as $p\to+\infty$,
$$
\cos \bigg( \frac{\beta}{p} \bigg) = 1 + O \bigg( \frac{1}{p^{2}}\bigg)
\quad
\textrm{and}\quad  
\sin \bigg(\frac{\beta}{p} \bigg) = \theta + O \bigg( \frac{1}{p^{3}} \bigg).
$$
Using the ansatz $\theta= \frac{\beta}{p} $ and the above Taylor's expansions,  $f_{p}'$ can be rewritten as
\begin{align*} 
 f'_{p}(\theta) =& -(p-1) r^{-1} |q|^{2} \left( \theta + O\left( {1}/{p^{3}} \right) \right)  
  \\ &+ (\lambda_{1} - \lambda_{2})
   \left(\sin 2 \alpha  \left( 1 + O\left( 
 {1}/{p^{2}} \right) 
   \right) + \cos 2 \alpha \left( \theta + O\left( {1}/{p^{3}} \right) \right) \right) \\ 
   =& - C (p-1) \theta + (\lambda_{1} - \lambda_{2}) \sin 2 \alpha +  +  \theta \cos 2 \alpha + O \bigg(\frac{1}{p^{2}} \bigg) ,\end{align*}
where $C=  r^{-1} |q|^{2} $.
Then, for $p$ large,  $f_{p}'(\theta_p)= 0$ if and only if
\begin{equation*}
\theta_{p} = \frac{ (\lambda_{1} - \lambda_{2}) \sin 2 \alpha}{ C (p-1) -  \cos 2 \alpha} + O \bigg( \frac{1}{p^{2}} \bigg).
\end{equation*}
Now, we set $C_{1}= (\lambda_{1} - \lambda_{2}) \sin 2 \alpha$, $C_{2}=C=  r^{-1} |q|^{2}$ and $C_{3} =  -2C  - 2 \cos 2 \alpha$,
then  we can rewrite $\theta_{p}$ in the following more compact form:
\begin{equation}
\label{genius}
\theta_{p} = \frac{C_{1}}{ C_{2} p + C_{3}} = \frac{C_{1}}{C_{3}( \frac{C_{2}}{C_{3}} p + 1)} = \frac{C_{1}}{C_{3}} \frac{1}{\frac{C_{2}}{C_{3}} p + 1}.
\end{equation}
Finally set $x=\frac{1}{\frac{C_{2}}{C_{3}} p}$, to conclude we need to need only to apply the Taylor's expansion, near $x=0$,  for the function $\frac{1}{\frac{1}{x}+1}= \frac{x}{x+1}$ (that is
$
\frac{x}{x+1} = x + O(x^{2})
$), then  \eqref{genius} can be rewritten as 
\begin{equation*} 
\theta_{p} = \frac{C_{1}}{C_{3}} \bigg [ \frac{C_{3}}{C_{2} p } + O \bigg( \frac{1}{p^{2}} \bigg) \bigg] = \overline{C}\, \frac{1}{ p} + O \bigg( \frac{1}{p^{2}} \bigg),
\end{equation*}
with $\overline{C}=\frac{C_{1}}{C_{2} } $. So $\beta=\overline{C} +O \bigg( \frac{1}{p} \bigg)$.\\

\end{proof}

Let us now make  a few remarks on the previous result on some special cases. 
\begin{rem}[Case $\lambda_1=\lambda_2$.]
\label{lambdaEqual}
Note that $\lambda_1$ and $\lambda_2$ are the eigenvalues of the matrix of the second order derivatives. 
Whenever $\lambda_1=\lambda_2=:\lambda$, then 
$$
f_p(\theta)=-(p-1) r^{-1}|q|^2\sin^2\theta+\lambda,
$$
the obviously, for all $p>1$, the maximum is attained for $\theta=0$, This implies that in this case, at non characteristic points (i.e. $|q|\neq 0$), the optimal control for the $p$-problem is actually the same of the optimal control for the limit problem $p=+\infty$,
i.e.
$$\overline{\nu}=
I_{2} -  n_{0} \otimes n_{0},
$$
where $n_0$ is the horizontal normal, see \cite{dirr} for more details for the case $p=+\infty$.

\end{rem}

\begin{rem}[Case $\alpha=0$ or $\alpha=\frac{\pi}{2}$.]
If $\alpha\in \left\{0,\frac{\pi}{2}\right\}$ again the maximum is  attained in $\theta=0$.
Remember that the case $\alpha=0$ corresponds to the case case where the horizontal gradient in $\R^2$ point is $(1,0)^t$, then the horizontal normal points in the direction of the first vector field $X_1$, while in the case
$\alpha=\frac{\pi}{2}$  the horizontal normal points in the direction of the second vector field $X_2$.
\end{rem}

To conclude this section let give the general result, removing the additional assumption of $M$ diagonal.

\begin{theorem} \label{tubBis}
Let us consider the $p$-Hamiltonian $H_p$ 
 introduced 
in \eqref{hami21}.
 Fixed $r$, $q$  and $M$ and assume that  $q \neq 0$
 Then, for large $p$, the optimal control  is $\overline{\nu}=\nu_{\overline{\theta}}$,  where  $\nu_{\theta}$ is defined in \eqref{ca2} and
\begin{equation}
\overline{\theta} = \overline{C}\; \frac{1}{ p} + O \bigg( \frac{1}{p^{2}} \bigg), 
\end{equation}
  with the constant  $\overline{C}$ depends only 
on the variable $r$, $q$ and the eigenvalues of the matrix $M$. Note that the expansion is only valid for $p|q|^2$ large.
For $p|q|^2=O(\lambda_1)$ see Chapter 7.
\end{theorem}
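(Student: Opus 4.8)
The plan is to deduce Theorem \ref{tubBis} from the already--established diagonal case, Theorem \ref{tub}, by exploiting the orthogonal invariance recorded in Remark \ref{diagonalMatrix}. Since $M$ is symmetric there is an orthogonal matrix $\cO$ with $\cO M\cO^T=D:=\mathrm{diag}(\lambda_1,\lambda_2)$, where $\lambda_1,\lambda_2$ are the eigenvalues of $M$. The first step is to verify that the admissible set $\cA$ is stable under the conjugation $\nu\mapsto\cO\nu\cO^T$: each defining condition is preserved, because $\cO\nu\cO^T$ is again symmetric, $(\cO\nu\cO^T)^2=\cO\nu^2\cO^T$ gives $I_2-(\cO\nu\cO^T)^2=\cO(I_2-\nu^2)\cO^T\ge 0$, and $\tr(I_2-\cO\nu^2\cO^T)=\tr(I_2-\nu^2)=1$. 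Hence $\nu\mapsto\cO\nu\cO^T$ is a bijection of $\cA$ onto itself.

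Combining this with the identity $h_p(r,q,M,\nu)=h_p(r,\cO q,\cO M\cO^T,\cO\nu\cO^T)$ from Remark \ref{diagonalMatrix}, I would rewrite the supremum defining \eqref{hami21} as
\begin{equation*}
H_p(r,q,M)=\sup_{\nu\in\cA}h_p(r,q,M,\nu)=\sup_{\tilde\nu\in\cA}h_p(r,\cO q,D,\tilde\nu)=H_p(r,\cO q,D),
\end{equation*}
where $\tilde\nu=\cO\nu\cO^T$. Thus the optimisation for a general $M$ coincides with the one for the diagonal matrix $D$ and the rotated gradient $\tilde q:=\cO q$, which is nonzero with $|\tilde q|=|q|$. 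Writing $\tilde q/|q|=(\cos\tilde\alpha,\sin\tilde\alpha)^T$, Theorem \ref{tub} applies verbatim to the triple $(r,\tilde q,D)$ and yields the optimiser $\nu_{\tilde\theta}$ with $\tilde\theta=\overline C\,p^{-1}+O(p^{-2})$ and $\overline C=(\lambda_1-\lambda_2)\sin(2\tilde\alpha)/(2r^{-1}|q|^2)$.

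It then remains to pull the optimiser back, $\overline\nu=\cO^T\nu_{\tilde\theta}\cO$, and to identify the angle. Since $\cO^T R_{\tilde\theta}\cO=R_{\pm\tilde\theta}$ according to $\det\cO=\pm1$, the pulled--back control is again of the form \eqref{ca2}, namely $\overline\nu=\nu_{\overline\theta}$ with $\overline\theta=\pm\tilde\theta$, so the asserted expansion $\overline\theta=\overline C\,p^{-1}+O(p^{-2})$ holds. To exhibit that $\overline C$ is coordinate--free, I would rewrite the numerator invariantly: with $q^\perp$ the $90^\circ$ rotation of $q$, a direct computation in the eigenbasis gives $(\lambda_1-\lambda_2)\sin(2\tilde\alpha)=\mp 2\langle q^\perp,Mq\rangle/|q|^2$, whence $\overline C=\mp r\,\langle q^\perp,Mq\rangle/|q|^4$ (the sign matching that of $\det\cO$). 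In the eigenbasis this reduces exactly to the stated dependence on $r$, $q$ and the eigenvalues of $M$.

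The main obstacle is not a single computation but the bookkeeping of the two rotations $R_\theta$ and $\cO$, together with the precise regime of validity. The linearisation underlying Theorem \ref{tub} requires the regularising term $(p-1)r^{-1}|q|^2\sin^2\theta$ to dominate the bounded curvature contribution of size $O(\lambda_1-\lambda_2)$; this is precisely the hypothesis that $p|q|^2$ be large, under which the stationary point sits near $\theta=0$ and the Taylor expansion closes. When $p|q|^2=O(\lambda_1)$ the two terms are comparable, the stationary point drifts away from $0$, and the perturbation collapses — one is then effectively in the near--characteristic regime treated separately in Section 7. I would therefore keep the validity caveat in the statement rather than attempt to extend the expansion into that borderline range.
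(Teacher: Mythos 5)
Your proposal is correct and follows essentially the same route as the paper, whose entire proof of Theorem \ref{tubBis} is the one-line reduction ``this follows immediately by Theorem \ref{tub} and Remark \ref{diagonalMatrix}.'' You have simply made that reduction rigorous by supplying the details the paper leaves implicit --- the stability of $\mathcal{A}$ under conjugation $\nu\mapsto\mathcal{O}\nu\mathcal{O}^T$ (which also silently corrects the Remark's $\mathcal{O}\nu$ to the conjugation actually needed), the pullback $\overline\nu=\mathcal{O}^T\nu_{\tilde\theta}\mathcal{O}$ with the $\det\mathcal{O}=\pm 1$ sign bookkeeping, and the coordinate-free form $\overline C=\mp r\,\langle q^\perp,Mq\rangle/|q|^4$ of the constant.
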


\begin{proof} This follows immediately by Theorem \ref{tub} and Remark
\ref{diagonalMatrix}.

\end{proof}

\subsection{Case $q = 0$: characteristic points.}

In the case that $q=0$ the function $h_{p}$ is simplified and it does not depend on $p$. In fact it has the form
\begin{equation*}
h_{p}(r,0,M, \nu) = h(M, \nu) = Tr[ \nu \nu^{T} M].
\end{equation*}
Furthermore we observe that we can generate a general admissible control starting from the rotation of a generic unit vector. For sake of simplicity (see Remark \ref{RemarkStartingVector_q=0} later)
we fix as starting vector 
\begin{equation*}
n_{\infty} := \begin{bmatrix} 1 \\ 0 \end{bmatrix} ,
\end{equation*}
and we rotate it by a rotation matrix $R(\theta)$ as follows:
\begin{equation*}
n_{\theta} := R(\theta)n_{\infty}=
\begin{bmatrix} \cos \theta & -\sin \theta \\ \sin \theta & \cos \theta \end{bmatrix} \begin{bmatrix} 1 \\ 0 \end{bmatrix} = \begin{bmatrix} \cos \theta \\ \sin \theta \end{bmatrix},
\end{equation*}
where $\theta \in [0, 2 \pi)$. We define the admissible control as
\begin{equation}
\label{FInalControl=0}
\nu_{\theta}: = I_{2} - n_{\theta} \otimes n_{\theta} = \begin{bmatrix} \sin^{2} \theta & - \sin \theta \cos \theta \\ - \sin \theta \cos \theta & \cos^{2} \theta \end{bmatrix}.
\end{equation}
Then, by recalling that $\nu_{\theta}$ is a projection matrix we obtain immediately
$$
h_{p}(r,0,M, \nu_{\theta}) = \lambda_{1} \sin^{2}\theta  + \lambda_{2} \cos^{2}\theta. 
$$

\begin{theorem} \label{tub8}
Let us consider the $p$-Hamiltonian introduced as in \eqref{hami21}, $\nu_{\theta}$ defined as in \eqref{ca2} and fixed $r$, $q$  and $M$. Let us denote by $\lambda_1,\lambda_2$  the eigenvalues of $M$. Assume that  $q = 0$. Then, for all $p>1$, the optimal control for the $p$-Hamiltonian is independent on $p$ and it is given by $\overline{\nu} = \nu_{\overline{\theta}}$ where the control is defined in \eqref{FInalControl=0} and
\begin{equation*}
\overline{\theta} = \frac{ \pi}{2} \ \ \mbox{whenever  $\lambda_{1} - \lambda_{2}>0$},
\end{equation*}
\begin{equation*}
\overline{\theta}=0\ \ \mbox{whenever $\lambda_{1} - \lambda_{2}<0$},
\end{equation*}
while  for $\lambda_{1} = \lambda_{2}=:\lambda$, $h_p(r,0,M, \nu_{\theta})=\lambda$ is constant so all possible angle $\theta$ are associated to optimal controls.
\end{theorem}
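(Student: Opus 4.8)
The plan is to reduce the optimisation to a completely elementary one-variable maximisation, exploiting the simplification of $h_p$ already recorded immediately before the statement. Since $q=0$ annihilates the first term in \eqref{littleh}, every admissible control $\nu_\theta$ of the form \eqref{FInalControl=0} gives the $p$-independent quantity
\[
f(\theta):=h_p(r,0,M,\nu_\theta)=\lambda_1\sin^2\theta+\lambda_2\cos^2\theta .
\]
First I would rewrite this, using $\cos^2\theta=1-\sin^2\theta$, in the single-parameter form
\[
f(\theta)=\lambda_2+(\lambda_1-\lambda_2)\sin^2\theta ,
\]
so that the entire dependence on $\theta$ is carried by the scalar $\sin^2\theta\in[0,1]$, weighted by the sign of $\lambda_1-\lambda_2$. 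I would also note that $\nu_\theta=\nu_{\theta+\pi}$ (since $\nu_\theta$ depends only on $n_\theta\otimes n_\theta$, which is $\pi$-periodic), so it suffices to maximise over $\theta\in[0,\pi)$, on which $\sin^2\theta=1$ only at $\theta=\tfrac\pi2$ and $\sin^2\theta=0$ only at $\theta=0$.

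The maximisation then splits into the three asserted cases purely by the sign of $\lambda_1-\lambda_2$. If $\lambda_1-\lambda_2>0$, the expression above is increasing in $\sin^2\theta$, so its maximum is attained exactly where $\sin^2\theta=1$, i.e. at $\overline\theta=\tfrac\pi2$, with $H_p=f(\overline\theta)=\lambda_1$. If $\lambda_1-\lambda_2<0$, the expression is decreasing in $\sin^2\theta$, so the maximum is attained where $\sin^2\theta=0$, i.e. at the endpoint $\overline\theta=0$, with value $\lambda_2$. Finally, if $\lambda_1=\lambda_2=:\lambda$ then $f\equiv\lambda$ is constant and every $\theta$ is optimal. (Equivalently, one may differentiate to get $f'(\theta)=(\lambda_1-\lambda_2)\sin 2\theta$, whose only zeros on $[0,\pi)$ are $0$ and $\tfrac\pi2$, and read off the global maximiser from the sign of $f'$; but the monotone rewriting makes the global-versus-local distinction automatic.)

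As a consistency check I would observe that in each non-degenerate case the selected $\nu_{\overline\theta}$ is the orthogonal projection onto the eigenspace of the \emph{larger} eigenvalue: for $\overline\theta=\tfrac\pi2$ one has $n_\theta=(0,1)^T$, so $\nu_\theta$ projects onto $\mathrm{Span}\,(1,0)^T$, the eigenvector for $\lambda_1$, giving $H_p=\max\{\lambda_1,\lambda_2\}$, in agreement with the heuristic recorded after Theorem \ref{theoremMainDDR} that at characteristic points the supersolution nonlinearity is the maximal eigenvalue. There is no genuine analytic obstacle here — this is the easiest of the three results, precisely because the delicate $p$-scaling of the non-characteristic case is absent once $q=0$. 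The only points requiring care are the bookkeeping reduction of the $\theta$-range via $\nu_\theta=\nu_{\theta+\pi}$ (to avoid double-counting maximisers) and the explicit treatment of the degenerate case $\lambda_1=\lambda_2$, where the maximiser is non-unique.
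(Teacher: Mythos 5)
Your proof is correct and takes essentially the same approach as the paper: with $q=0$ the $p$-dependent term of $h_p$ drops out, and the problem reduces (after the diagonalisation of $M$ via Remark \ref{diagonalMatrix}, which the paper's proof recalls explicitly and you inherit implicitly through the displayed formula before the statement) to maximising $f(\theta)=\lambda_1\sin^2\theta+\lambda_2\cos^2\theta$ over $\theta\in[0,\pi)$, split by the sign of $\lambda_1-\lambda_2$. The only difference is cosmetic: the paper finds the stationary points $\theta\in\{0,\tfrac{\pi}{2}\}$ from $f'(\theta)=(\lambda_1-\lambda_2)\sin 2\theta$ and classifies them by the sign of $f''$, whereas your monotone rewriting $f(\theta)=\lambda_2+(\lambda_1-\lambda_2)\sin^2\theta$ identifies the same maximisers directly and, unlike the paper's purely local second-derivative test, delivers \emph{global} maximality without any extra comparison of values.
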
.
\begin{proof}
 First recall that by Remark
\ref{diagonalMatrix} we can assume that $M$ has the  diagonal form given in \eqref{bre1}.
Fixed $r$ and $m$ and denoting 
$h_p(r,0,M, \nu_{\theta})=:f(\theta)$, we look at the stationary points for the case $\lambda_1\neq \lambda_2$.
Taking the derivatives, we have
\begin{align}
f'(\theta)= 2\lambda_{1} \sin \theta \cos \theta - 2 \lambda_{2} \sin \theta  \cos \theta  = 2 (\lambda_{1} - \lambda_{2})  \sin \theta  \cos \theta \nonumber \ \ \ \ \ \ \ \  \\   = (\lambda_{1} - \lambda_{2}) \sin  2 \theta. \ \ \ \ \ \ \ \ \ \ \ \ \
\end{align}
If $\lambda_{1}= \lambda_{2}$ we obtain that $f'(\theta)=0$ for all $\theta \in [0, 2 \pi) $, i.e. the function $f$ is constant. If $\lambda_{1}\neq \lambda_{2}$ we note that
\begin{equation}
\sin 2\theta = 0 \Longleftrightarrow \theta = \frac{k \pi}{2} \ \ \mbox{with $k \in \{ 0,1, 2, 3 \}$}.
\end{equation}
In order to find the maximum values for $\lambda_1\neq \lambda_2$ we can easily compute the second derivatives, that is
\begin{equation}
f''(\theta) = 2 (\lambda_{1} - \lambda_{2}) \cos  2 \theta.
\end{equation}
so the stationary points in $[0,\pi)$ are $\theta=0$ and $\theta=\frac{\pi}{2}$.
Then
\begin{enumerate}
\item for $\lambda_{1} - \lambda_{2} >0$ it holds true $f''(\frac{\pi}{2})  = -2 (\lambda_{1} - \lambda_{2})<0$.
\item for $\lambda_{1} - \lambda_{2} <0$ it holds true $f''(0) = -2 (\lambda_{1} - \lambda_{2})<0$,
\end{enumerate}
and this concludes the proof.

\end{proof}

\begin{rem}
\label{RemarkStartingVector_q=0}
An easy computation shows that the optimal control is independent on the choice of the  staring vector  $n_{\infty}$.
In fact, choosing a generic unit vector $n_{\infty}=(\cos\alpha, \sin\alpha)^T$ we would have got the control in the form given in   \eqref{ca2} and the same results found in Theorem \ref{tub8} for the angle $\theta+\alpha$, then the  optimal control would be exactly the same.
\end{rem}

\begin{rem}\label{Rem64}
It is instructive to compare this with the formula derived in \cite{dirr}. There, in characteristic points,  we get a projection on the on the  eigenspace of the minimal eigenvalue for subsolutions and on that of the maximal eigenvalue for supersoutions. Here we see: The optimal control in a characteristic point is not unique if both eigenvalues are equal, 
otherwise it is the projection on the eigenspace of the {\em maximal } eigenvalue.
\end{rem}


\section{Numerical computations and illustrations}
In this section we give some computed examples for the optimal control to illustrate the behaviour of the controlled random walk, in particular in the vicinity of critical points.

Consider the $\pi$-periodic function $f_p$ from \eqref{LargeP}. First note that $q$ and $p$ appear only as $p|q|^2,$ so $f_p$ can be expressed as a function
of $p|q|^2.$ This means that in the following graphs the limit $p\to \infty$ corresponds to a scaling in the $|q|$-direction. Moreover $|q|\to 0$ means moving towards a characteristic point, $p\to \infty$ away from it. As $r^{-1}$ appears only multiplying $p$, we see that local convergence does not depend on the choice of the level set function. This is to be expected, as the limit evolution is geometric.
Moreover, note that $r,$ the value function, is constant on a level set.
First let us consider Figure 1, which plots $f_p$ for $p=10,$ $\alpha =0,$ 
$\lambda_1=1,$ $\lambda_2=0$ and $r=1.$ The  $q_1$-axis points in the direction of  eigenvector corresponding to the eigenvalue $\lambda_1.$ Geometrically this means that the projected horizontal eigenvector points exactly in the direction of the largest eigenvalue of the horizontal Hessian. In this case we see that for a distinct value of $q$ the maximum jumps from $0$ to $\pi/2,$ corresponding to either maximizing the second term in $f_p$, i.e. making $|\sin(\theta)|=1,$
or maximizing the first term, making $\sin(\theta)=0.$
\begin{center}
\begin{figure}\label{figure1}
\includegraphics[scale=0.32]{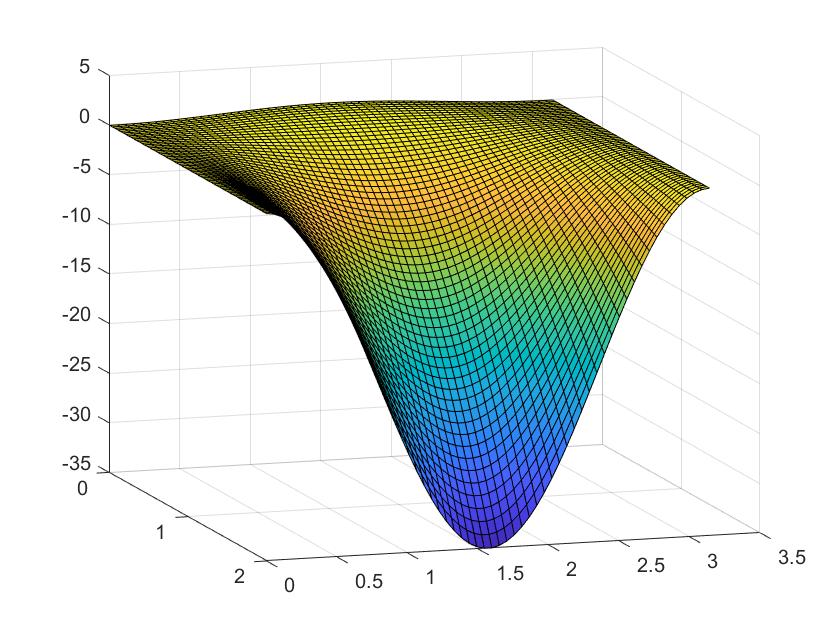}
\caption{
$f_{10}(\theta)$ for $p=10$ and $\alpha=0.$
}
\end{figure}
\end{center}

\newpage
\vspace{-2cm}
\begin{center}
\begin{figure}\label{figure2}
\begin{tabular}{lr}
\hspace{-0.8cm}
\includegraphics[scale=0.23]{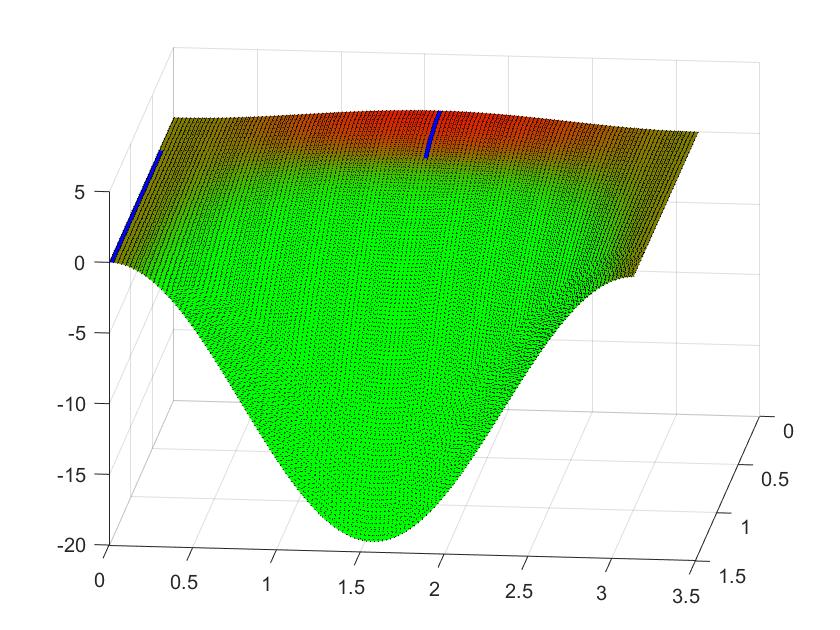}
\hspace{-0.9cm}
\includegraphics[scale=0.24]{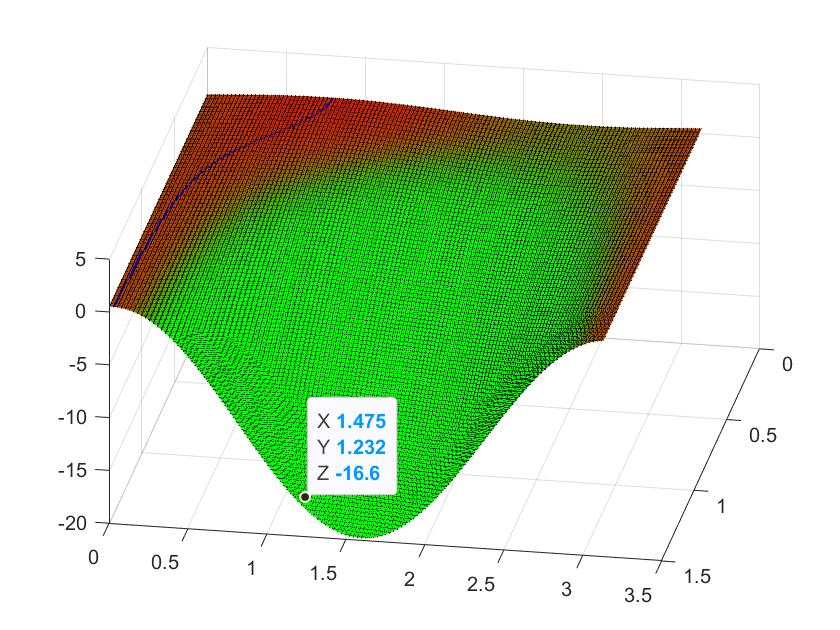}
\end{tabular}
\caption{ On the left
$f_{10}(\theta)$  (i.e. $p=10$) for $\alpha=0,$ on the right for $\alpha=\pi/4,$ the branch of maximizing $\theta$ is in both cases in blue. Note that the maximising angle is discontinuous in the left picture and  continuous on the right picture. 
}
\end{figure}
\end{center}
For $\alpha\not=0,$ however, there is a continuous branch of maximizing angles. The situation is illustrated in Figure 2.\\

It turns out that the case on the left, where the $q$-vector points in the direction of the eigenvector with the largest eigenvalue of the horizontal Hessian, is the only case where such a singularity occurs. Here, in the limit $p\to \infty,$   for the critical point, the optimal control is projection on the eigenspace for the maximal eigenvalue, i.e. parallel to $q,$ while immediately away  from the critical point the optima control projects {\em orthogonally} to $q.$ \\

We see in Figure 3 that  this vortex-like discontinuity occurs for $\alpha=\pi$ (equivalent to $\alpha=0$) for small (non vanishing) $|q|.$

\newpage
\begin{center}
\begin{figure}\label{figure3}
\includegraphics[scale=0.38]{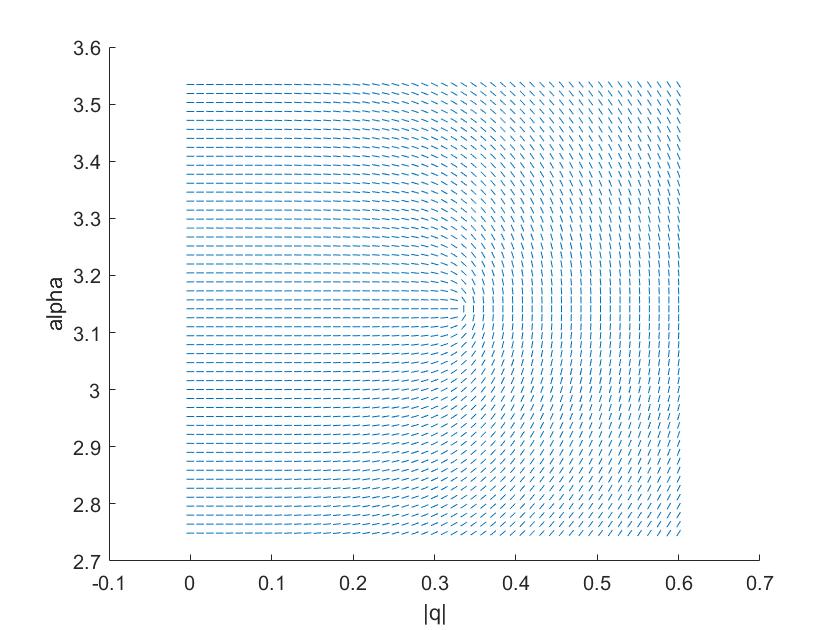}
\caption{ Direction of optimal control (i.e. allowed direction of motion of the controlled process) for $p=10,$ $M={\rm diag}(1,0).$ Note the singularity when $|q|^2p=\lambda_1-\lambda_2=1.$ In this regime, the asymptotic expansion of  Section 6 is not valid.
}
\end{figure}
\end{center}
\vspace{-1cm}
For $\lambda_1=\lambda_2,$ this discontinuity is moved into the characteristic point, see below the case of the unit sphere (see Figure 5).\\

Finally, in order to illustrate the convergence of the optimal control, we plot in Figure 4 for the same parameters an entire period for both $p=10$ and $p=50.$
\vspace{-1cm}
\begin{center}
\begin{figure}\label{figure4}
\begin{tabular}{lr}
\hspace{-1cm}
\includegraphics[scale=0.25]{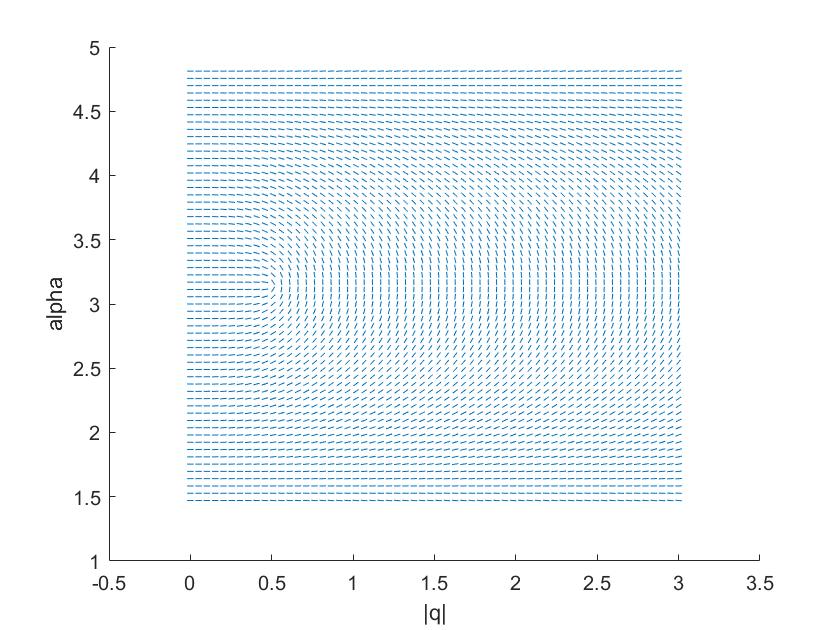}
\hspace{-0.9cm}
\includegraphics[scale=0.25]{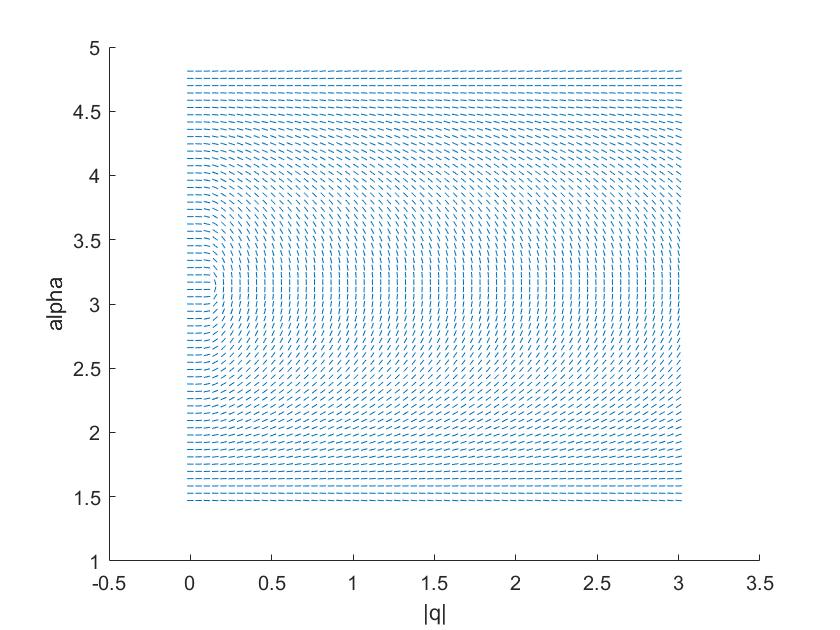}
\end{tabular}
\caption{ Direction of optimal control (i.e. allowed direction of motion of the controlled process) for  $M={\rm diag}(1,0)$, i.e. $\lambda_1=1$ and $\lambda_2=0$. 
On the left picture: $p=5$; while on the right picture: $p=30.$}
\end{figure}
\end{center}
\newpage

Let us apply this to two specific surfaces, the unit sphere centred at $(0,0,1)$ and and ellipsoid with same center, but given by
$2x^2+y^2+(z-1)^2=1.$ Due to the lower symmetry, the singularity of the control field  for the ellipsoid is moved away from the characteristic point and clearly visible. We have plotted the horizontal normal as dashed line and the control field as solid line, both in a 3-dimensional perspective and the projection of the vectors on the $x-y$-plane.
\begin{center}
\begin{figure}\label{figure5}
\begin{tabular}{lr}
\hspace{-1cm}
\includegraphics[scale=0.25]{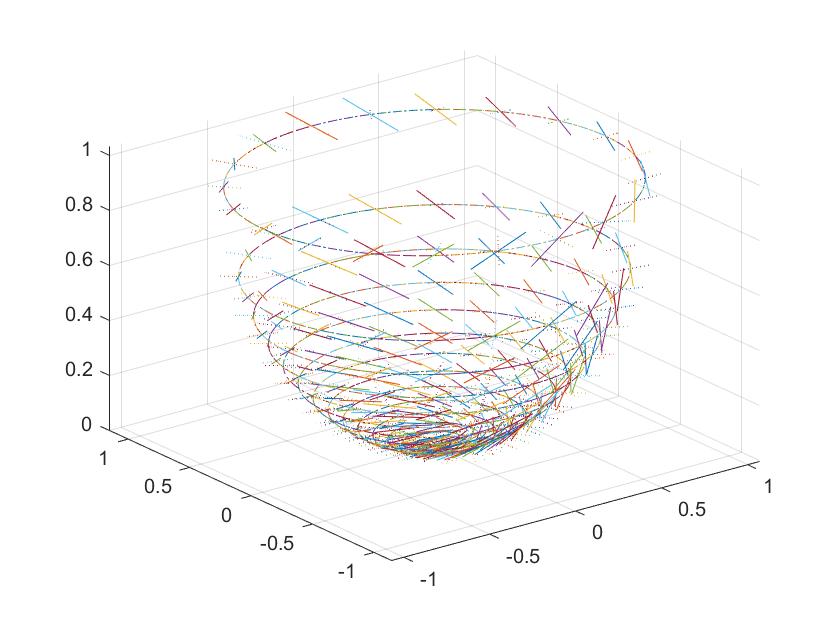}
\hspace{-0.9cm}
\includegraphics[scale=0.25]{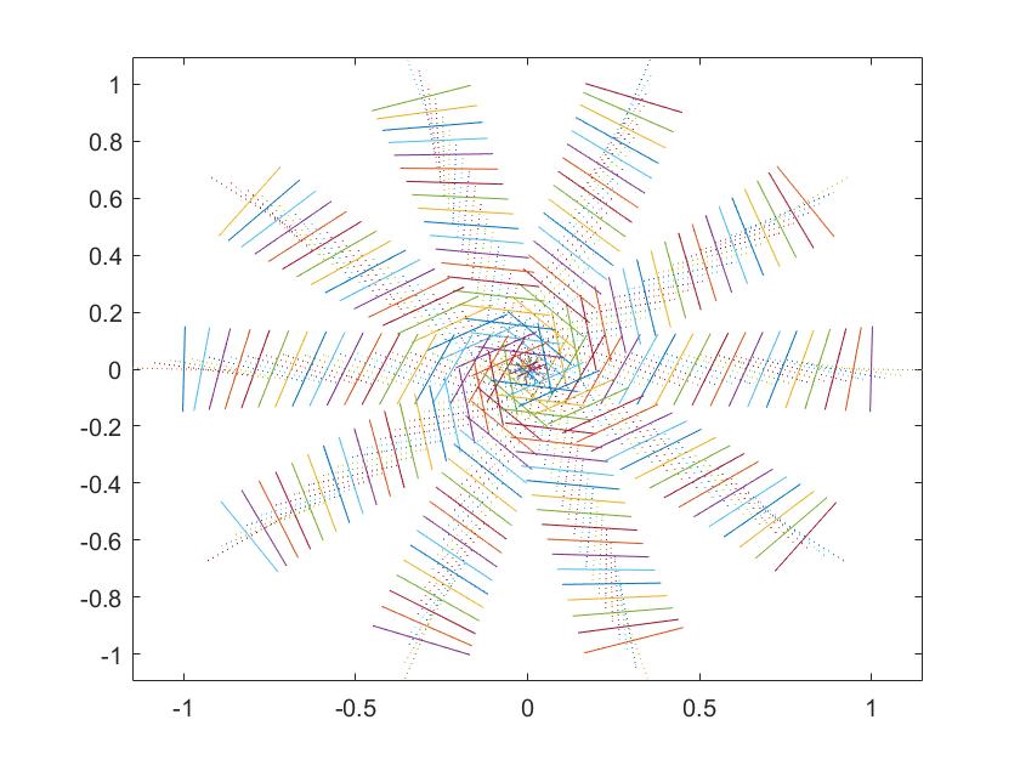}
\end{tabular}
\caption{Direction of optimal control (solid) and horizontal normal (dashed) for the unit sphere with characteristic point $(0,0,0).$ On the left these vectors are represented in 3D, while on the right we see their  2D-projection on the $x-y$-plane (for $p=5$).}
\end{figure}
\end{center}

\begin{center}
\begin{figure}\label{figure6}
\begin{tabular}{lr}
\hspace{-1cm}
\includegraphics[scale=0.25]{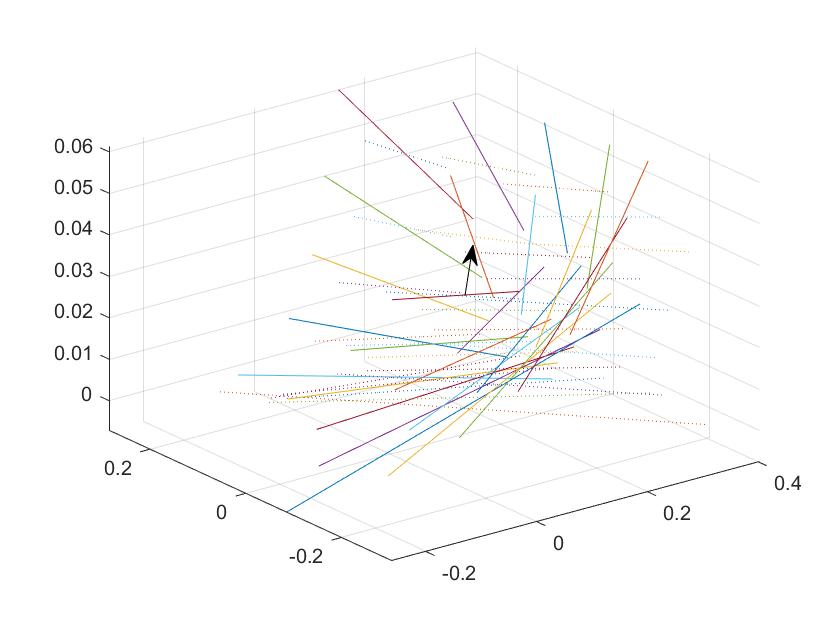}
\hspace{-0.9cm}
\includegraphics[scale=0.25]{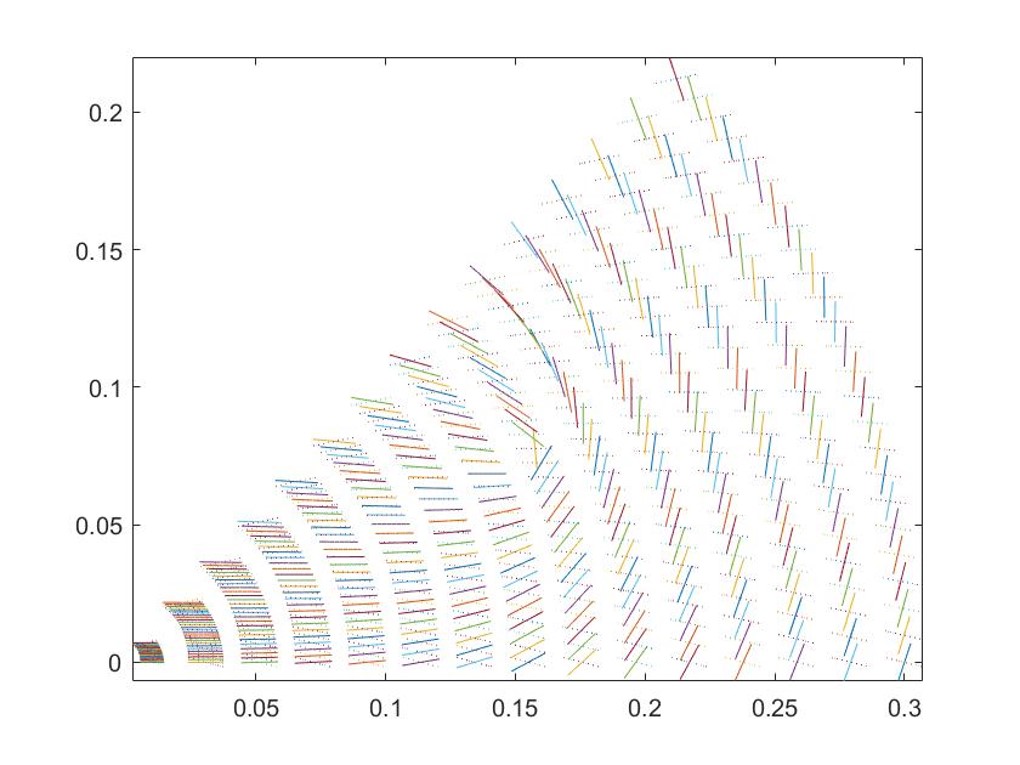}
\end{tabular}
\caption{Direction of optimal control (solid) and horizontal normal (dashed) for the ellipsoid $2x^2+y^2+(z-1)^2=1$ with characteristic point $(0,0,0).$ On the left these vectors are represented in 3D, while on the right we see their  2D-projection on the $x-y$-plane (for $p=5$). The arrow points out the singularity. At this point, the horizontal normal points in the direction of an eigenvector of the horizontal Hessian.}
\end{figure}
\end{center}

\FloatBarrier

\end{document}